\newtheorem{lem}{Lemma}
\newtheorem{thm}{Theorem}
\newtheorem{cor}{Corollary}
\newtheorem{rem}{Remark}
\begin{document}

\title[]{On Matching, and Even Rectifying, Dynamical Systems\\
through Koopman Operator Eigenfunctions.
}

\author{Erik M.~Bollt}
\email{bolltem@clarkson.edu}
\affiliation{Department of Mathematics, Department of Electrical and Computer Engineering, Department of Physics, Clarkson University, Potsdam, New York 13699, USA} 

\author{Qianxiao Li}
\email{liqix@ihpc.a-star.edu.sg}
\affiliation{Institute of High Performance Computing, Agency for Science, Technology and Research, Singapore 138632, Singapore} 

\author{Felix Dietrich}
\email{felix.dietrich@tum.de}
\affiliation{Department of Chemical and Biomolecular Engineering, Department of Applied Mathematics and Statistics, Johns Hopkins University and JHMI} 

\author{Ioannis
Kevrekidis}
\email{yannis@princeton.edu}
\affiliation{Department of Chemical and Biomolecular Engineering, Department of Applied Mathematics and Statistics, Johns Hopkins University and JHMI} 


\date{\today}

\begin{abstract}
Matching dynamical systems, through different forms of conjugacies and equivalences, has long been a fundamental concept, and
a powerful tool, in the study and classification of nonlinear dynamic behavior (e.g. through 
normal forms). 
In this paper we will
argue that the use of the Koopman operator and its spectrum is particularly well suited
for this endeavor, both in theory, but also especially in view of recent data-driven
algorithm developments.
We believe, and document through illustrative examples, that this can nontrivially extend the use and applicability of the
Koopman spectral theoretical and computational machinery beyond modeling and prediction,
towards what can be considered as a systematic discovery of  ``Cole-Hopf-type"
transformations for dynamics.

\end{abstract}

\maketitle

\smallskip
\noindent\textbf{Keywords: Koopman operator, rectification, conjugacy, flow box, DMD, EDMD, dynamical systems, data-driven algorithms} 


\section{Introduction}


A central concept in dynamical systems theory since the inception of the field (dating back to foundational work by Henri Poincar\'{e})  has been the concept of classifying systems ``up to" some notion of equivalence, \cite{mawhin1993centennial}, such as, for example, conjugacy of flows, that is, a homeomorphism between the state spaces such that the flows commute.

However, even if such a homeomorphism is known to exist, constructing it in closed form may be quite difficult.  Our own previous work~\cite{skufca2008concept} has focused on fixed point iteration methods to construct conjugacies when they exist or measure the defect from conjugacy when they do not exist, but the application was limited, and alternatively we tried symbolic dynamics methods~\cite{bollt2010comparing}.
In this paper, we will develop a methodology to tackle the problem of homeomorphism construction. 
Even though this will only be possible for us under significantly restrictive assumptions, we believe that the methodology
is nontrivial and informative, and we will find the homeomorphism explicitly in many examples.
We also introduce a computational method for the purely data-driven approximation of this mapping.
Our methodology is based on the fact that conjugacy---that is, existence of a \textit{spatial} isomorphism---between two dynamical systems directly implies the existence of a \textit{spectral} isomorphism between them. 
%
%
This spectral isomorphism relates the Koopman operators of the two systems~\cite{redei2012history}.

In the Koopman operator framework, the central objects of study are observables, which are functions of the state of the dynamical system. The action of the Koopman operator on these functions describes their temporal evolution, driven by the underlying dynamics. The operator acts linearly on the function space, a central property that makes it interesting for numerical approximation.
Typically, a certain number of eigenfunctions of the operator are obtained (in closed form or numerically), and then a linear combination of them is used to approximate a given observable. A particular observable of interest is the identity function on the state space, because the temporal evolution of this observable directly corresponds to the temporal evolution of different initial conditions.


The concept of ``matching" dynamical systems by the spectrum of their Koopman operators is not new.
In fact, it lies at the heart of ergodic theoretical developments, since the classic work of J. von Neumann and Paul Halmos, \cite{neumann1932operatorenmethode, halmos1942operator};  for a review see \cite{redei2012history}.
%
Yet the classical literature appears more concerned with proving the existence of the transformation, rather than with systematically obtaining it in closed or data-driven form. 
The notion of conjugacy for maps, and orbit equivalence for flows, is known to imply strong statements connecting the Koopman eigenfunctions of the conjugate systems, \cite{mezic-2017, lan2013linearization}; (Semi)-conjugacies (factors) also play an important role in Koopman spectral analysis {with the first such from data being} \cite{mezic2004comparison}.  In particular, many examples of nontrivial eigenfunctions can be, and have been, found given a conjugacy to a simpler system (linear) and a general nonlinear system when the homeomorphism connecting them may be known.  
{In particular, Lan and Mezic\cite{lan2013linearization} expanded the linearizing transformations of a nonlinear system to the full basin of attraction of an equilibrium. Their work extended the local homeomorphism beyond the usual neighborhood of the equilibrium, where it can be constructed using the famous Hartman-Grobman theorem \cite{perko2013differential}.
In \cite{mezic-2017} there are remarks relating this extension to the flow-box theorem. In \cite{mauroy-2013}, the authors discuss that Koopman spectral analysis relates isostables and isochrons to transformations to the classical action-angle decomposition of systems in the neighborhood of an attracting limit cycle.  Likewise, \cite{lan2013linearization} contains examples where the linearizing transformations are Koopman eigenfunctions with eigenvalue one. 
Recently, Mohr and Mezic\cite{mohr2016koopman} introduced a concept of principal eigenfunctions of a nonlinear system with an asymptotically stable hyperbolic fixed point. They developed a construction of a sequence of approximate conjugacies (in a finite-dimensional Banach space) which, in the spirit of the Stone-Weierstrass theorem, converges uniformly to a diffeomorphism transforming the nonlinear system (i.e., matching it) in a certain domain to a linear one.
Our work builds on these prior benchmarks. We expand on the concept that spectrally matching eigenfunctions may implicitly include information concerning the conjugacy connecting the state spaces. Given appropriately matched eigenfunctions (a concept that we define below), we might directly construct the conjugacy transformation between systems.}

In considering using the Koopman operator machinery towards such a construction, 
note that when the Koopman operator is restricted to an attractor (e.g. a quasi-periodic torus, or say, a Lorenz attractor), or is only considered in an ergodic setting, the restricted operator is unitary \cite{mezic-2005,budivsic2012applied}. 
%
Different types of ergodic systems, characterized by mixed, both point and continuous spectra of the operator  are also being studied\cite{das-2017}, in combination with numerical techniques to select smooth eigenfunctions\cite{giannakis-2015b}.
Yet for a quantitative analysis of more general dynamical system applications, it is
also important to be able to usefully deal with non-ergodic invariant sets.
Here, we focus on such sets and use them to construct 
transformations whose domain of validity will be defined below.



Instead of approximating a large number of eigenfunctions (necessary to improve {\em prediction accuracy} through their linear combinations), for \textit{matching purposes} we only make use of a specific subset of eigenfunctions for each system, in number equal to the dimension of the corresponding state spaces.
%
%
%
We use these two sets of eigenfunctions (assumed to be known {\em a priori}), to reconstruct the homeomorphism between the two systems.
Computational experiments along these lines have started to appear in the literature (involving the current authors); the first example arose in a 
data-fusion context\cite{williams-2015b}, and the second in a more general study of gauge invariance in data mining\cite{kemeth-2017}.


One requirement is that the eigenfunctions are selected in pairs, one per system, each pair associated to the same eigenvalue. 
However, it is not enough to require that the paired eigenfunctions are associated to the same eigenvalue---they also have to be related by the homeomorphism between the two flows. This would be trivial if a single eigenfunction were associated with each eigenvalue---but this is not the case, in general, as we will rationalize through the construction of the eigenfunctions through  a simple linear PDE which represents the infinitesmal operator.

Selection of the particular eigenfunctions constituting the two sets is, therefore, a difficult problem which we ``assume away''.
We do, however, point out a way to circumvent this problem by appropriately restricting the function space in which we assume they lie.
We discuss this possibility in detail, and present a framework of assumptions such that the selection of corresponding pairs of eigenfunctions can be based on matching eigenvalues alone.

%
%

As we will discuss, the computational implementation of our framework (which we will call ``matching EDMD", or EDMD-M)  is closely related to the EDMD method for which working algorithms are readily available\cite{williams-2015,williams-2015b,li-2017}.
We illustrate our EDMD-M framework in two ways: By finding, in closed form, a homeomorphism between a linear and a conjugate nonlinear, two-dimensional system; and numerically for the Van der Pol oscillator and a system conjugate to it. 


An obvious, yet exciting application of the matching concept is its relation to the classic topic of ``rectification", that is, 
matching to the constant vector field, $\dot{z}=(1,0,..,0)$, which is arguably simpler than a linear dynamical system.
In this sense, our approach realizes the well-known flow-box theorem, \cite{perko2013differential} 
also known as ``rectifying" the problem \cite{carmen2000ordinary, teschl2012ordinary}. 
We demonstrate this possibility by first (partially) matching a nonlinear system to a linear one, then (partially) matching the linear system to its rectification, and then composing the resulting homeomorphisms to rectify the nonlinear system.  

The remainder of the paper is organized as follows: 
Section~\ref{sec:background} briefly outlines the mathematical background of topological orbit equivalence, the Koopman operator, and its infinitesimal generator.
With these prerequisites, we define and prove the main contribution of the paper---an explicit construction (under, we repeat, specific assumptions) of the homeomorphism between topologically orbit equivalent systems in section~\ref{sec:matched keigs}.
We discuss the nontrivial geometric multiplicity of eigenvalues of the Koopman operator in section~\ref{defect}, and then show how the  function space of observables can be restricted appropriately so as to circumvent this multiplicity of the spectrum.
The use of the ideas in the paper is demonstrated throughout via analytical as well as numerical examples.


\section{Background}\label{sec:background}

In this section, we review some standard theory of dynamical systems, leading to the Koopman operator and its infinitesimal generator as a PDE, the proof of which is in Appendix \ref{pdeforKoopmaneig}, with examples in Appendix \ref{solns}.  This section serves not only to recall important, albeit standard material,  but also it allows us to cast this standard language for our specific interest of matching systems through spectral concepts.  A thoroughly familiar reader may wish to skip forward to Sec.~\ref{sec:matched keigs}.

\subsection{Review of Matching Systems, Conjugacy, and Orbit Equivalence}\label{sec:review matching}

A central problem in dynamical systems is the notion of equivalence, standard definitions of which are topological in nature. Two discrete time maps, 
$F^{(1)}:{\mathbb X} \rightarrow {\mathbb X}$, and $F^{(2)}:{\mathbb Y}\rightarrow {\mathbb Y}$ are called conjugate if there is a homeomorphism (a regular change of coordinates) between ${\mathbb X}$, and ${\mathbb Y}$, that is $y=h(x):{\mathbb X}\rightarrow {\mathbb Y}$, such that I: $h$ is a homeomorphism between topological spaces ${\mathbb X}$, and ${\mathbb Y}$,  (that means $h$ is (1) one-to-one, (2) onto, (3) continuous, (4) the inverse is continuous, and together this defines the spaces as topologically equivalent), and II: the maps commute with respect to the change of variables; formally stated, $h\circ F^{(1)}(x)=F^{(2)}\circ h(x)$ for all $x\in {\mathbb X}$.  If furthermore, $h$ and $h^{-1}$ are continuously differentiable, then the maps are called diffeomorphic, which is a stronger form of equivalence.
When stating $F^{(i)}$, the superscript refers to the $i$-th system. Subscript $x_i$ denotes the $i$-th coordinate.

When discussing continuous time systems, the notion of topological orbit equivalence is relevant.
 If two (semi)flows $S_t^{(1)}:{\mathbb X} \rightarrow {\mathbb X}$ and $S_t^{(2)}:{\mathbb Y}\rightarrow {\mathbb Y}$ are topologically orbit equivalent, there exists a homeomorphism, $h:{\mathbb X}\rightarrow {\mathbb Y} $ such that $S_t^{(2)}(h(x))=h(S_t^{(1)}(x))$ for every $t\geq 0$ (or for every $t\in {\mathbb R}$ if they are each flows).

Conjugacy between dynamical systems defines an equivalence relation between vector fields on a manifold: two vector fields $F^{(1)},F^{(2)}$ on spaces (manifolds) ${\mathbb X}$ and ${\mathbb Y}$ respectively are equivalent if and only if their induced dynamical systems $\dot{x}=F^{(1)}(x)$ and $\dot{y}=F^{(2)}(y)$ are topologically orbit equivalent.  When such a topological orbit equivalence $h:{\mathbb X}\rightarrow {\mathbb Y} $ exists between the induced flows, we call the two systems ``matched''. 

\subsection{Review of the Koopman operator, and its Eigenfunctions}\label{koopstruct}

Here we recall the definition for the {\it Koopman operator structure} of a flow.
Consider, for example, a differential equation in $\mathbb{R}^d$, 
\begin{equation}\label{ode}
\dot{x}=F(x),
\end{equation}
 associated with a (here, autonomous) vector field,
\begin{equation}\label{ode2}
F:{\mathbb R}^d\rightarrow {\mathbb R}^d.
\end{equation}
 Note that a nonautonomous problem, $f(x,t):{\mathbb R}^d \times {\mathbb R} \rightarrow {\mathbb R}^d$ can be written in $d+1$ dimensions as an autonomous problem by augmenting with a time variable $\tau=t$, appending $\dot{\tau}=1$ to $f$ so that $F=[f,1]^T$. 
  Depending on $F$,  we assume an invariant flow\cite{perko2013differential} 
$S_t:M\to M$ for each $t\in\mathbb{R}$ (or semi-flow for $t\geq 0$), where $M\subseteq {\mathbb R}^d$ denotes an invariant set. We write $x(t):=S_t(x_0)$ for a trajectory starting at $x(0)=x_0\in M$.

The associated Koopman operator (also called composition operator) describes the evolution of ``observables", or ``measurements" along the flow, \cite{budivsic2012applied, mezic2013analysis}. These ``observation functions" $g:M\rightarrow {\mathbb C}$ are elements of a space of observation functions ${\cal F}$, for example ${\cal F}=L^2(M)=\{g:\int_M |g(s)|^2 ds<\infty \}$, which is commonly useful in numerical applications that utilize the inner product associated with the Hilbert space structure \cite{mezic-2005,budivsic2012applied,giannakis-2015b,kutz2016dynamic,mezic-2017}.
The Koopman operator ${\cal K}_{S_t}$ is associated with the flow $S_t$, such that
\begin{equation}\label{eq:koopman operator}
{\cal K}_{S_t}[g](x)=g\circ S_t(x).
\end{equation}
That is, for each $x$, we observe the value of an observable $g$ not at $x$, but ``downstream" by time $t$, at $S_t(x)$.  This defines the operator
as a flow ${\cal K}_{S_t}=:{\cal K}_t:{\cal F}\to {\cal F}$ on the function space ${\cal F}$ for each $t\in\mathbb{R}$ (or as a semi-flow if the relation only holds for $t\geq 0$).
An  interesting feature of the Koopman operator is that it is linear on $\mathcal{F}$, but at the cost of being infinite dimensional, even though the flow $S_t$ may be associated to a finite dimensional and nonlinear vector field (Eqs.~(\ref{ode})-(\ref{ode2})).
The spectral theory of Koopman operators \cite{gaspard1995spectral, budivsic2012applied, mezic2013analysis} concerns eigenfunctions and eigenvalues of the operator ${\cal K}_t$, which may be stated  in terms of the equation
\begin{equation}\label{eq:koopman definition}
{\cal K}_t[g](x)=b^t g(x)=e^{\lambda t} g(x).
\end{equation}
See Fig.~\ref{fig1} for a visualization of the action described in Eq.~\ref{eq:koopman definition}, and the discussion in the Appendix~\ref{pdeforKoopmaneig}.
\begin{figure}[hpt]     
	\centering   
			\def\svgwidth{.45\textwidth}
        	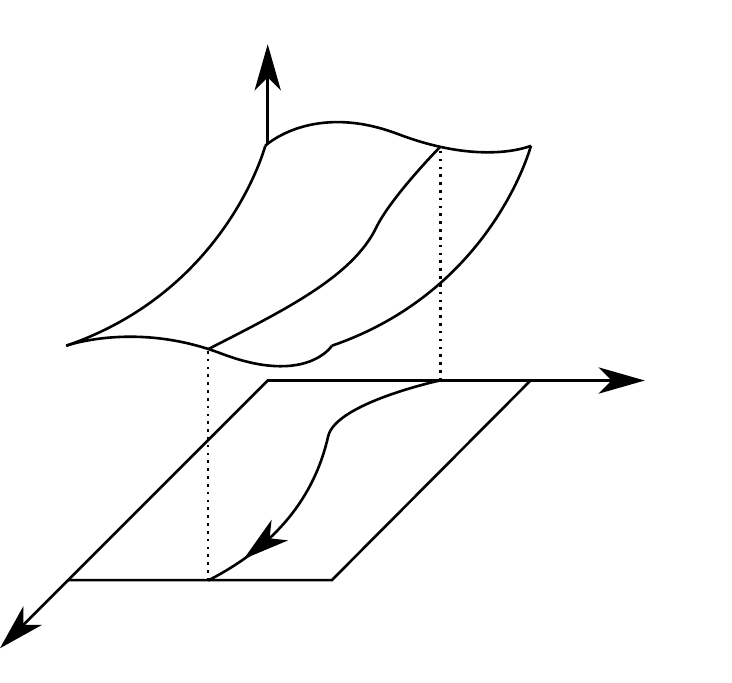
        	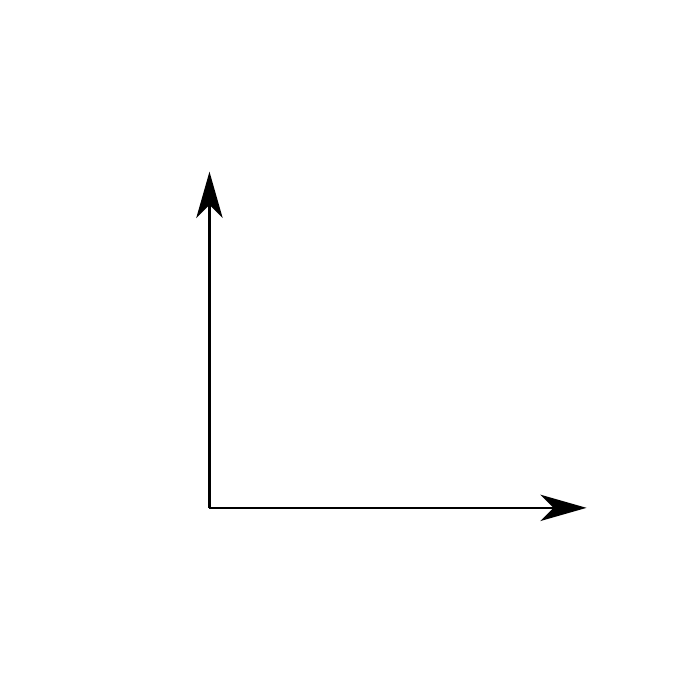
       	\caption{ Illustration of the action of a Koopman operator ${\cal K}_t$ corresponding to a flow $z(t)=S_t(z_0)$ as the observation of a function $g(z)$ along an orbit.  (Left) The notation ${\cal K}_t[g](z)=g\circ \phi_t(z)$ denotes that we measure $g$ at $\phi_t(z)$, $t$ time units downstream on the trajectory initialized at $z$. An eigenfunction $f$ satisfies the equation  ${\cal K}_t[f](z)=e^{\lambda t} f(z)$, implying that $f$ also satisfies the linear equation (Eq.~(\ref{n1})).   }
	\label{fig1}
\end{figure}
We will write an eigenvalue---eigenfunction pair of the Koopman operator as $(\lambda,g_\lambda(x))$, and call the pair a ``KEIG". For convenience, we will say ``KEIGs" either when referring to Koopman eigenfunctions or to the eigenvalue and eigenfunction pairs. We shall assume that $g_\lambda\in{\cal F}$.  The multiplying factor may also be written in terms of the eigenvalue, $b=e^\lambda$.


Despite the importance and popularity of the Koopman operator in an ever broadening theoretical, computational and applied literature in the field of dynamical systems, \cite{mezic2013analysis, budivsic2012applied, kutz2016dynamic}, there are surprisingly few explicit example dynamical systems, meaning explicit vector fields, for which KEIG pairs $(\lambda,g(z))$ can be explicitly written, analytically, in closed form.  There is the linear case, that we now state, but beyond that, we are not aware of many other examples, other than those produced through conjugacy to the linear system.  Instead, the literature emphasizes descriptive statements about the role and importance of the KEIGs, and then numerical methods, notably either  the matrix methods called DMD, \cite{schmid-2010,kutz2016dynamic} (and variants such as EDMD, \cite{williams-2015,williams-2015b,williams-2014}, including EDMD-DL, \cite{li-2017}) are used.  Our goal in this section is to note how the infinitesimal generator of the Koopman operator defines a first order PDE whose solutions are eigenfunctions.  Note that this fact has also been recently utilized in \cite{kaiser-2017} as a central element in developing control laws.

The adjoint of the Koopman operator is the Perron-Frobenius operator\cite{mezic-2005} (or transfer operator).
Several methods for the approximation of the transfer operator through its eigenfunctions have been developed, including the use of its infinitesimal generator\cite{klus-2017}. The same holds for the Koopman operator\cite{mezic2013analysis,budivsic2012applied,williams-2015,williams-2014,williams-2015b} and its generator\cite{kaiser-2017}.
Transfer and Koopman operators are often studied in the ergodic or stochastic setting.
Given an Ito diffusion process with a smooth vector field as the drift term, the infinitesimal generator of the transfer operator defines the \textit{Kolmogorov forward equation} (or \textit{Fokker-Planck equation}), whereas the \textit{Kolmogorov backward equation} is related to the infinitesimal generator of the Koopman operator \cite{klus-2017}. 

The linear problem is the example with an explicit KEIGs solution that is often discussed, where
\begin{equation}\label{linear1}
\dot{z}=F(z)=a z,
\end{equation}
 as a special case of Eq.~(\ref{ode}) where $a, z(t)\in {\mathbb R}$.  The KEIG pairs of this system may be described as the state observer, 
 \begin{equation}\label{KEIGSlin1}
 (\lambda,g(z))=(a,z),
 \end{equation}
  meaning the identity function 
  and the growth factor serve as a KEIG.
  This is easy to directly confirm, using the solution of the flow,
$z(t)=S_t(z_0)=e^{a t}z_0$.  It follows that,
\begin{equation}\label{check1}
{\cal K}_t[g](z)=g \circ S_t(z)=e^{a t}z=b^t g(z).
\end{equation}
The eigenvalue $\lambda=a$ also provides the multiplying factor, $b=e^\lambda$.
Likewise in a multivariate linear case, if $z(t)\in {\mathbb R}^d$ then we use the notation, $[\cdot]_i:{\mathbb R}^d\rightarrow {\mathbb R}$ to denote the projection function that selects the $i${-th} component of $z$.  Then if $\dot{z}=F(z)=A z,$ and $A=\Sigma=diag({a_i})$ is a diagonalized matrix, then $(a_i,[z]_i)$ are KEIGs.  The general linear problem is handled accordingly by standard linear theory, by considering similarity transformations to Jordan block form \cite{golub2012matrix}; {see \cite{mezic-2017} for details and definition of generalized eigenfunctions.}



We now describe the simple linear PDE, whose solutions are eigenfunctions of the Koopman operator (for a use case of the PDE in control of dynamical systems, see \cite{kaiser-2017}). This will help us to proceed toward the goal of this paper, which is to use KEIGs to develop the conjugacy between related systems.

\begin{thm}\label{thm2}
 Given a domain ${\mathbb X}\subseteq M\subseteq{\mathbb R^d}$, $z\in{\mathbb X}$, and $\dot{z}=F(z)$ with $F:{\mathbb X}\rightarrow {\mathbb R}^d$, then the corresponding Koopman operator has eigenfunctions $g(z)$ that are solutions of the linear PDE,
\begin{equation}\label{qpde}
\nabla g \cdot F(z)=\lambda g(z),
\end{equation}
if $\mathbb{X}$ is compact and $g(z):{\mathbb X}\rightarrow {\mathbb C}$ is in $C^1({\mathbb X})$, or alternatively, if $g(z)$ is $C^2({\mathbb X})$. 
\end{thm}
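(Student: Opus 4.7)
The plan is to connect the global eigenfunction relation $\mathcal{K}_t[g](z) = e^{\lambda t} g(z)$ to its infinitesimal form by differentiating in $t$ at $t=0$, and (for a full characterization) to reverse the argument by integrating an ODE along trajectories.

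First I would set up the infrastructure. Compactness of $\mathbb{X}$ together with continuity of $F$ (inherited from $F: \mathbb{X} \to \mathbb{R}^d$ via the assumed existence of the invariant flow) gives a well-defined flow $S_t$ on $\mathbb{X}$ with $\frac{d}{dt}S_t(z) = F(S_t(z))$ and $S_0(z) = z$. For the forward direction, assume $g \in C^1(\mathbb{X})$ is a Koopman eigenfunction, so $g(S_t(z)) = e^{\lambda t} g(z)$ for all $t$ and all $z \in \mathbb{X}$. Both sides are differentiable in $t$: the right-hand side trivially, and the left-hand side by the chain rule since $g$ is $C^1$ and $t \mapsto S_t(z)$ is $C^1$. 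Evaluating $\frac{d}{dt}$ at $t=0$ gives
\begin{equation*}
\nabla g(z) \cdot F(z) \;=\; \lambda g(z),
\end{equation*}
which is exactly the claimed PDE \eqref{qpde}.

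Next I would handle the converse (so that the PDE truly characterizes KEIGs, not just serves as a necessary condition). Suppose $g \in C^1(\mathbb{X})$ solves $\nabla g \cdot F = \lambda g$ pointwise. Fix $z \in \mathbb{X}$ and define $h(t) := g(S_t(z))$. By the chain rule,
\begin{equation*}
h'(t) \;=\; \nabla g(S_t(z)) \cdot F(S_t(z)) \;=\; \lambda\, g(S_t(z)) \;=\; \lambda\, h(t),
\end{equation*}
with $h(0) = g(z)$. Uniqueness for the scalar linear ODE $h' = \lambda h$ forces $h(t) = e^{\lambda t} g(z)$, i.e.\ $\mathcal{K}_t[g](z) = e^{\lambda t} g(z)$ for all $z$ and $t$ (or $t \geq 0$ in the semi-flow case).

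The alternative $C^2$ hypothesis is included to accommodate the stochastic/second-order generator viewpoint mentioned just above the statement, where a diffusion term would introduce a Laplacian and thus require one extra derivative; the argument above goes through unchanged on the deterministic part, while second-order regularity also secures continuity of the derivative $\nabla g$ needed for generator-type extensions. The only genuinely delicate point, and the one I would spend the most care on, is ensuring that the differentiation in $t$ at $t = 0$ is justified \emph{uniformly} enough that pointwise differentiation recovers the PDE on all of $\mathbb{X}$: compactness of $\mathbb{X}$ and the invariance assumption give a uniformly bounded flow that stays inside the domain of regularity of $g$, so the chain rule applies at every $z \in \mathbb{X}$ and the PDE holds on the full domain. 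Everything else is bookkeeping around the Picard--Lindelöf uniqueness of the flow and of the scalar exponential ODE.
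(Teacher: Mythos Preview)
Your argument is correct and mirrors the paper's approach: both differentiate the eigenfunction relation at $t=0$ via the chain rule. The paper frames this as computing the infinitesimal generator $A_{\mathcal K}g = \lim_{t\to 0}(g\circ S_t - g)/t$ and then invoking the mean value theorem to identify it with $\nabla g\cdot F$, which is exactly your forward direction. You go a step further by supplying the converse (integrating the scalar ODE $h'=\lambda h$ along trajectories), which the paper does not spell out.

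One correction on a side remark: your reading of the $C^2$ alternative is off. The paper does not invoke $C^2$ to accommodate a stochastic or Laplacian-type generator; it offers $C^2$ as a substitute for compactness of $\mathbb{X}$. The mean value theorem step in the generator computation is justified via compactness when $g$ is only $C^1$, whereas the paper asserts that for more general $\mathbb{X}$ the same conclusion holds if $g\in C^2(\mathbb{X})$.
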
 
The proof of this Koopman PDE theorem by discussion of infinitesimal generators is in Appendix \ref{pdeforKoopmaneig}, along with a statement concerning the formulation for weak solutions.  Explicit examples of Koopman eigenfunctions as solutions of this PDE are given in Appendix \ref{solns}.

\section{Constructing Orbit Equivalence from Eigenfunctions}\label{sec:matched keigs}

Having established the mathematical background for Koopman operators and topological orbit equivalence in the previous section, we will now describe the main contribution of the paper: an approach to the construction of the transformation function between two (already assumed) conjugate systems. We proceed by stating the commonly used relationship between eigenfunctions of conjugate systems in Theorem~\ref{thm1}. Then, we argue that the transformation can be recovered if ``matching'' pairs of eigenfunctions are given. We formulate this in two corollaries, Cor.~(\ref{cor1}) for the one-dimensional case and Cor.~(\ref{cor2}) for higher dimensions.

The reason for the distinction between the two corollaries is that in higher dimensions, the subspace spanned by eigenfunctions associated to a given eigenvalue can be larger than one-dimensional.
%
We refer to this property of the Koopman operator as ``nontrivial geometric multiplicity", in analogy to linear algebra of matrices. The multiplicity results in a significant complication to our goal of matching systems, because it is no longer clear which eigenfunction in one system ``matches'' a particular eigenfunction of the other system---equality of the associated eigenvalues is not enough.

In all of the examples and statements below, we focus on systems with nonzero, real eigenvalues. In case a statement is valid for a larger set of eigenvalues, for example, the whole complex plane, we will state it explicitly.
The following theorem\cite{budivsic2012applied} describes the relationship between eigenfunctions of topologically equivalent flows.  The related theorem for topologically conjugate maps is found for example in, \cite{mezic2013analysis, budivsic2012applied}.  {A stronger statement of the following, including treatment of generalized eigenfunctions is found as Proposition 3.1 in \cite{mezic-2017}.}

\begin{thm}\label{thm1}
If two (semi)flows $S_t^{(1)}:{\mathbb X} \rightarrow {\mathbb X}$ and $S_t^{(2)}:{\mathbb Y}\rightarrow {\mathbb Y}$ are topologically orbit equivalent by $h:{\mathbb X}\rightarrow {\mathbb Y}$,  
and  each of these two flows has a Koopman operator structure as reviewed in Sec.~\ref{koopstruct}, and if $g_{\lambda}^{(2)}(x)$ is a Koopman eigenfunction of ${\cal K}_t^{(2)}$ associated with $S_t^{(2)}$ and with an eigenvalue $\lambda$, then,
\begin{equation}\label{relate}
g_{\lambda}^{(1)}(x)=g_{\lambda}^{(2)}\circ h(x),
\end{equation} is an eigenfunction associated with the Koopman operator ${\cal K}_t^{(1)}$ of $S_t^{(1)}$, and is associated with the same eigenvalue $\lambda$. 
\end{thm}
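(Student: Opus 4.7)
The plan is to verify the Koopman eigenvalue equation for ${\cal K}_t^{(1)}$ applied to the candidate function $g_\lambda^{(1)} := g_\lambda^{(2)}\circ h$ by a direct composition argument. The entire content of the claim hinges on the orbit-equivalence identity
\begin{equation*}
S_t^{(2)}\circ h \;=\; h\circ S_t^{(1)}
\end{equation*}
recalled in Section~\ref{sec:review matching}, combined with the defining relation (\ref{eq:koopman operator}) for the Koopman operators. No further spectral machinery is required.

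First I would unfold the left-hand side using the definition of ${\cal K}_t^{(1)}$: since $g_\lambda^{(1)}(x) = g_\lambda^{(2)}(h(x))$, one has ${\cal K}_t^{(1)}[g_\lambda^{(1)}](x) = g_\lambda^{(1)}(S_t^{(1)}(x)) = g_\lambda^{(2)}\bigl(h(S_t^{(1)}(x))\bigr)$. Next I would apply the commutation identity to push $h$ through the flow and rewrite the inner expression as $S_t^{(2)}(h(x))$, giving $g_\lambda^{(2)}(S_t^{(2)}(h(x)))$. This last quantity is exactly ${\cal K}_t^{(2)}[g_\lambda^{(2)}]$ evaluated at the point $h(x)\in{\mathbb Y}$. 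Invoking the hypothesis that $g_\lambda^{(2)}$ is a Koopman eigenfunction of ${\cal K}_t^{(2)}$ with eigenvalue $\lambda$, I would then replace this by $e^{\lambda t} g_\lambda^{(2)}(h(x)) = e^{\lambda t} g_\lambda^{(1)}(x)$, closing the calculation. Schematically, the whole proof is the chain of equalities
\begin{equation*}
{\cal K}_t^{(1)}[g_\lambda^{(2)}\circ h](x) \;=\; g_\lambda^{(2)}\circ h\circ S_t^{(1)}(x) \;=\; g_\lambda^{(2)}\circ S_t^{(2)}\circ h(x) \;=\; e^{\lambda t}\,(g_\lambda^{(2)}\circ h)(x),
\end{equation*}
which is valid for all $t$ in the relevant range (every $t\geq 0$ in the semiflow case, every $t\in{\mathbb R}$ otherwise).

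Because the pointwise identity collapses so cleanly, the only subtlety I anticipate lies not in the equality itself but in confirming that $g_\lambda^{(1)}$ actually lives in the observable space ${\cal F}$ assumed for system~(1), and is not the zero function. Continuity of $g_\lambda^{(1)}$ follows from continuity of $g_\lambda^{(2)}$ and of $h$; non-triviality follows because $h$ is a bijection and $g_\lambda^{(2)}\not\equiv 0$. The cautionary point I would flag is that, since $h$ is only assumed to be a homeomorphism rather than a diffeomorphism, one cannot in general expect $g_\lambda^{(1)}$ to inherit the $C^1$ or $C^2$ regularity required to invoke the PDE characterization of Theorem~\ref{thm2}. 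I would therefore keep the argument at the level of the pointwise functional relation ${\cal K}_t[g]=e^{\lambda t}g$, which is what is actually asserted, and avoid appealing to the infinitesimal generator during this particular proof.
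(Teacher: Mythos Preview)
Your proposal is correct and is essentially the same direct composition argument the paper gives: the paper's proof is the identical chain of equalities, simply read in the reverse direction (starting from $e^{\lambda t}g_\lambda^{(2)}\circ h(x)$ and ending at ${\cal K}_t^{(1)}[g_\lambda^{(2)}\circ h](x)$). Your additional remarks about non-triviality and about not relying on the infinitesimal-generator PDE go slightly beyond what the paper bothers to say, but they are consistent with it and do not change the approach.
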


A proof of this is found in \cite{budivsic2012applied}, which we restate here in our notation, 
\begin{eqnarray}
e^{\lambda t}g_{\lambda}^{(2)}\circ h (x)&=& \nonumber \\
&=&e^{\lambda t} g_{\lambda}^{(2)}(y)
={\cal K}_t^{(2)}[g_{\lambda}^{(2)}](y)
=g_{\lambda}^{(2)}\circ S_t^{(2)} (y)
=g_{\lambda}^{(2)}\circ S_t^{(2)} \circ h(x)
=g_{\lambda}^{(2)}\circ h \circ S_t^{(1)} (x)= \nonumber \\
&=&{\cal K}_t^{(1)}[g_{\lambda}^{(2)}\circ h](x).
\end{eqnarray}
\medskip

Thm.~\ref{thm1} is stated as if the homeomorphism (the function $h$), is known.  
Additionally, it is assumed that \textit{some} KEIG $(\lambda,g_{\lambda}^{(1)}(x))$ for the ${\mathbb X}$-system is also known.  Then, the matching KEIG $(\lambda,g_{\lambda}^{(2)}(y))$ for the ${\mathbb Y}$-system is given by solving Eq.~(\ref{relate}) for $g_{\lambda}^{(2)}$.

In the following two sections (\ref{sec:1d cor1}) and (\ref{defectsection}), we flip what is usually considered known and unknown in Thm.~\ref{thm1}.
If we know the matching eigenfunctions $g^{(1)}$ and $g^{(2)}$ (or two matching \textit{sets} of eigenfunctions in higher dimensions, see Sec.~\ref{defectsection}), we can recover the transformation $h$. 

\subsection{Constructing Orbit Equivalence in the Real Line}\label{sec:1d cor1}
In this section, we assume knowledge of two matching eigenfunctions, one for each system, associated with the same eigenvalue, and by Thm.~\ref{thm1} with some unknown $h$. This information can be used to explicitly construct the change of variables $h$ between the two systems. We formulate this construction in the following corollary as a local statement at a point $x\in\mathbb{X}$, and then discuss the domain of validity around that point.

\begin{cor}\label{cor1} Consider two systems with states in invariant subsets ${\mathbb X},{\mathbb Y}\subseteq {\mathbb R}$, and with continuous time flows  $S_t^{(1)}$ and $S_t^{(2)}$ that are topologically orbit equivalent through an (unknown) homeomorphism $h$. Assume one Koopman eigenfunction $g^{(i)}_\lambda$, $i=1,2$, is given for each system, associated to identical, nonzero eigenvalues, $\lambda^{(1)}=\lambda^{(2)}$. Also assume that the eigenfunctions are related at a given point $x\in\mathbb{X}$ by $h$ such that $g_{\lambda}^{(1)}(x)=(g_{\lambda}^{(2)}\circ h)(x)=g_{\lambda}^{(2)}(y)|_{y=h(x)}$.
If $g_{\lambda}^{(2)}$ has an inverse, the transformation between the spaces may be written,
\begin{equation}\label{matcheq}
y=h(x)=g_{\lambda}^{(2),-1}\circ g_{\lambda}^{(1)}(x).
\end{equation}
Alternatively, by the implicit function theorem, if the Jacobian (in one dimension: the derivative) $J=\frac{\partial}{\partial x} g^{(2)}$ is invertible at the point $y=h(x)$, there exists an open set $A\subset{\mathbb X}$ with $x\in A$ such that the transformation $y=h(x)$ exists for all $x\in A$, and thus Eq.~\ref{matcheq} holds on $A$.
This transformation will have as much regularity at each $x \in {\mathbb X}$ as $g_{\lambda}^{(1)}$, and $g_{\lambda}^{(2)}$.
\end{cor}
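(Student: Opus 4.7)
The plan is to treat Eq.~(\ref{matcheq}) as the direct algebraic consequence of the identity $g_{\lambda}^{(1)}(x)=g_{\lambda}^{(2)}(h(x))$ that is supplied as the corollary's hypothesis (and that holds automatically, by Thm.~\ref{thm1}, whenever $g_{\lambda}^{(1)}$ and $g_{\lambda}^{(2)}$ are the matched eigenfunction pair picked out by the conjugacy $h$). The work then splits into three parts matching the three assertions of the corollary: a globally invertible case, a locally invertible case justified by the inverse function theorem, and a regularity claim.

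First I would handle the global case. If $g_{\lambda}^{(2)}$ is invertible on ${\mathbb Y}$, then post-composing both sides of $g_{\lambda}^{(1)}(x)=g_{\lambda}^{(2)}(h(x))$ with $(g_{\lambda}^{(2)})^{-1}$ is legitimate and immediately yields Eq.~(\ref{matcheq}) at every $x$ for which the pointwise relation holds. In this global case, the content of the corollary reduces to the observation that a functional equation $f=\phi\circ h$ with invertible $\phi$ can be solved for $h$ uniquely; nothing dynamical is needed beyond what Thm.~\ref{thm1} has already supplied.

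The locally invertible case is slightly more delicate. Assume only that the derivative $J=\partial_y g_{\lambda}^{(2)}$ is nonzero at $y=h(x)$. The inverse function theorem then produces an open neighborhood $V\subseteq{\mathbb Y}$ of $y$ and an open neighborhood $U\subseteq{\mathbb R}$ of $g_{\lambda}^{(2)}(y)=g_{\lambda}^{(1)}(x)$ such that $g_{\lambda}^{(2)}|_V\colon V\to U$ is a homeomorphism (in fact a $C^k$ diffeomorphism, when $g_{\lambda}^{(2)}\in C^k$). Pulling $U$ back through the continuous map $g_{\lambda}^{(1)}$ and intersecting with $h^{-1}(V)$ (which is a neighborhood of $x$ by continuity of $h$) yields an open set $A\ni x$ on which the local branch of $(g_{\lambda}^{(2)})^{-1}$ supplied by the theorem is defined on $g_{\lambda}^{(1)}(A)$ and agrees with $h$ at $x$. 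On this $A$, Eq.~(\ref{matcheq}) holds verbatim.

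For the regularity claim I would appeal to the chain rule: the inverse function theorem promotes $(g_{\lambda}^{(2)})^{-1}$ to the same $C^k$ class as $g_{\lambda}^{(2)}$, so the composition $(g_{\lambda}^{(2)})^{-1}\circ g_{\lambda}^{(1)}$ is at least as regular as the less regular of its two factors. I expect the main obstacle to be local bookkeeping rather than any deep analysis: one must verify that the branch of the inverse picked out by the theorem is the one consistent with the hypothesized value $h(x)$, and that the neighborhood $A$ constructed above is nonempty. Both follow from continuity together with $x\in A$, but they deserve explicit mention when $g_{\lambda}^{(2)}$ fails to be globally monotone on ${\mathbb Y}$.
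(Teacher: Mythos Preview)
Your proposal is correct and follows the same logic the paper relies on: the paper does not give a separate proof of Corollary~\ref{cor1} but treats Eq.~(\ref{matcheq}) as the immediate algebraic consequence of the relation $g_{\lambda}^{(1)}(x)=g_{\lambda}^{(2)}\circ h(x)$ from Thm.~\ref{thm1}, with the local statement coming from the implicit (equivalently, inverse) function theorem already invoked in the corollary's wording. Your write-up is in fact more explicit than the paper's, which simply states the corollary and then moves on to discuss the domain of validity.
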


Note that in particular, to use $h$ as we intend, i.e. to match \textit{differential equations}, a local \textit{diffeomorphism} is required.
Cor.~\ref{cor1} of Thm.~\ref{thm1} applies only locally around a given point $x$. We can extend the domain of the constructed function $h$ by the following arguments:
Denote the domain and range of $g^{(1)}$ \begin{equation}
D_1=D(g^{(1)}_\lambda,{\mathbb X})=\{x:x\in {\mathbb X}, g^{(1)}_\lambda:X\rightarrow {\mathbb C} \mbox{ exist} \}
\end{equation} 
\begin{equation}\label{range1}
R_1=R(g^{(1)}_\lambda,D_1,{\mathbb C})=\{z|z=g^{(1)}_\lambda(x), x\in D_1, z\in {\mathbb C}\},
\end{equation}
and likewise $D_2\subset {\mathbb Y}\subset {\mathbb R}$ and $R_2$ for $g^{(2)}_\lambda$.  
Also relevant is the preimage of $R_2\cap {\mathbb R}$ with respect to $g^{(1)}_\lambda$, 
\begin{equation}
g^{(1),-1}_\lambda
(R_2\cap {\mathbb R})=\{x|z=g^{(1)}_\lambda(x), z\in(R_2\cap {\mathbb R}), x\in {\mathbb X}\}.
\end{equation}
Then with this notation, the domain of $h$ in ${\mathbb X}$ for Eq.~(\ref{matcheq}) may be written  as 
\begin{equation}\label{intersects}
D(h,{\mathbb X})=D(g_{\lambda}^{(2),-1}\circ g_{\lambda}^{(1)},{\mathbb X})=g^{(1),-1}_\lambda
(R_2\cap {\mathbb R}).
\end{equation}
Note that we may restrict ${\mathbb X}$ to the subset $D(h,{\mathbb X})$, so that the change of variables $h$ will be defined on that subset of the domain (whenever it is not empty). 
This domain may not be invariant under the flow. In this case, it is possible to discuss eigenfunctions in terms of the corresponding subflows and subdomains\cite{mezic-2017}. 

\subsubsection{Example 1: Linearizing a Nonlinear Problem in 1D}

As an illustrative problem, we attempt to linearize (that is, to match to a linear system),
\begin{equation}\label{quad}
\dot{x}=F^{(1)}(x)=x^2, \quad x(0)=x_0.
\end{equation}
A function $h$ transforming system (\ref{quad}) to a linear system is only valid for $x\in\mathbb{R}/\{0\}$---possibly better thought of as the two domains $\mathbb{R}^+$ and $\mathbb{R}^-$. These two domains will be derived below, with a reference to Eq.~(\ref{intersects}), as soon as the eigenfunctions are available.
Equation~(\ref{quad}) illustrates the classic scenario of blow-up in finite time, since the solution flow is
\begin{equation}
x(t)=S_t^{(1)}(x_0)=\frac{1}{\frac{1}{x_0}-t}.
\end{equation}
The blow-up time is
\begin{equation}
t^*=\frac{1}{x_0},
\end{equation}
and this is called a movable singularity, since it depends on the initial condition.  The differential equation whose solutions are Koopman eigenfunctions $g_\lambda$ (from  Eq.~(\ref{thm1b})) is
\begin{equation}
\nabla g_\lambda \cdot F^{(1)}(x)=\frac{dg_\lambda}{dx} x^2=\lambda g_\lambda(x).
\end{equation}
By integrating factors, we find a general solution,
\begin{equation}\label{quadkeig}
g_\lambda(x)=c e^{-\frac{\lambda}{x}}
\end{equation}
for a constant $c=g_0e^{\frac{\lambda}{x_0}}$ that we can neglect since it appears on both sides of the equation.  Note that a related quadratic problem and its eigenfunctions were  described by Mezic\cite{mezic-2017}, where the derivation was through transformations found from center-manifold theory as a classic example in Kelley, \cite{kelley1967stable}.

We see that the $g_\lambda$ are indeed KEIGs by definition (\ref{eq:koopman definition}):
\begin{equation}
{\cal K}_t[g_\lambda](x)={\cal K}_t[e^{-\frac{\lambda}{x}}]=e^{-\lambda (S_t(x))^{-1}}=e^{-\lambda (\frac{x}{1-x t})^{-1}}=e^{\lambda (t-\frac{1}{x})}=e^{\lambda t}e^{-\frac{\lambda}{x}}=e^{\lambda t}g_\lambda(x),
\end{equation}
for any $\lambda\in\mathbb{C}$.
In Fig.~(\ref{fig2})(Right) we show the behavior of this KEIG function, for $\lambda=1$.
\begin{figure}[hpt]     
	\centering   
        	\includegraphics[width=.4\textwidth]{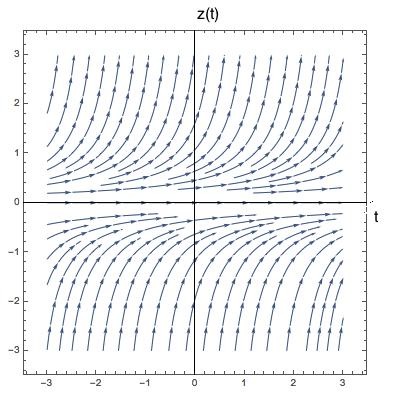}
        	\includegraphics[width=.5\textwidth]{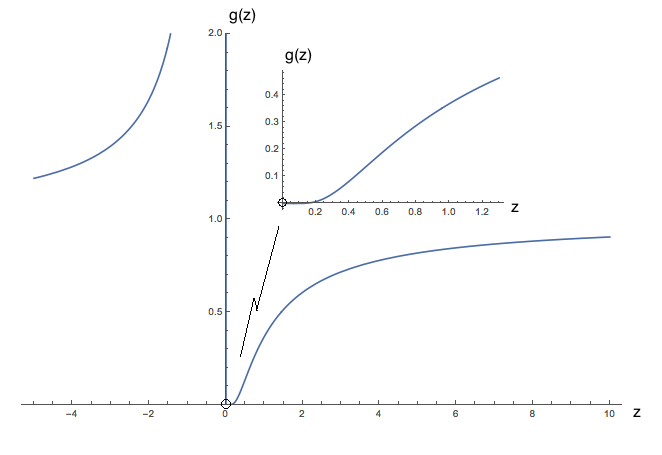}
       	\caption{ 
        (Left) Typical trajectories of the ODE Eq.~(\ref{quad}) where $F^{(1)}(z)=z^2$ leads to blow-up in finite time at $t^*=\frac{1}{z_0}$. (Right) The eigenfunctions $g_\lambda(z)=e^{-\frac{\lambda}{z}}$, one in each half-line, chosen with $\lambda=1$ from Eq.~(\ref{quadkeig}) are plotted (also blown up in the inset near $z=0$). 
The domain is best described as the two domains ${\mathbb R}^-$ and ${\mathbb R}^+$ with corresponding ranges $g(z)\in (1,\infty)$ and $(0,1)$. The initial condition selects the relevant domain.
}
	\label{fig2}
\end{figure}
It turns out for our purpose of matching systems, the choice of $\lambda$ is not important here, as long as it is nonzero and has the same value for both systems.
Nevertheless, the choice of $\lambda$ influences whether the eigenfunction is complex- or real-valued, and also imposes restrictions on the domain of the associated eigenfunction. We choose $\lambda$ to equal the parameter $a$ of the linear system we match to, which is real and positive, and here w.l.o.g. is set to 1. With this choice, the domain of definition of the transformation will be $\mathbb{R}/\{0\}$ (the union of $\mathbb{R}^+$ and $\mathbb{R}^-$), as shown below.



%
%

The goal of the example is to find a transformation $h$ from states of the nonlinear problem to states of the linear problem,
\begin{equation}
\dot{y}=F^{(2)}(y)=y,\  y\in\mathbb{R}
\end{equation}
which has a solution as the flow, 
\begin{equation}
y(t)=S_t^{(2)}(y_0)=e^{t}y_0.
\end{equation}
In this case, the KEIG differential equation specializes to Eq.~(\ref{qpde}),
\begin{equation}
\nabla g \cdot F^{(2)}(y)=\frac{dg}{dy} y =\lambda^{(2)} g(y).
\end{equation}
There is a general solution that can be found by integrating factors, $g(y)=k y^{\lambda^{(2)}}$, with $k=g_0x_0^{-\lambda^{(2)}}$,
 but by similar arguments we may choose,
\begin{equation}
g^{(2)}(y)=y^{\lambda^{(2)}},
\end{equation}
and note that this corresponds to what is called the ``state observer"  if $\lambda^{(1)}=\lambda^{(2)}=1$ for the sake of simplicity in defining the inverse of the resulting state observer: if $z=g^{(2)}(y)=y$, then the inverse is $y=g^{(2),-1}(z)=z$, the identity function. 
Considering Corollary~\ref{cor1}, the range $R_2$ of $g^{(2)}$ is $R_2=\mathbb{R}$, the domain of $g^{(1)}$ is $(\mathbb{R}^+\cup \mathbb{R}^-)$, and hence, following the definition of the domain for the transformation $h$ in Eq.~\ref{intersects}, we have
\begin{equation}
D(h,{\mathbb X})=g^{(1),-1}_\lambda(R_2\cap {\mathbb R})=g^{(1),-1}_\lambda( {\mathbb R})=(\mathbb{R}^+\cup \mathbb{R}^-).
\end{equation}
Therefore, the linearization of system~(\ref{quad}) is valid for all positive values of $x(t)$ (with time derivative defined through Eq.~\ref{quad}), and also for all negative values of $x(t)$. The value $0$ where the mapping is undefined, is excluded. 
By Theorem \ref{thm1} and Corollary \ref{cor1}, if there is a homeomorphism $y=h(x)$, then in each of the two domains,
\begin{equation}\label{lex}
y=h(x)=g^{(2),-1}\circ g^{(1)}_\lambda(x)=g^{(1)}_\lambda(x)=e^{-\frac{\lambda}{x}}.
\end{equation}
The KEIG $g^{(1)}$ from the nonlinear problem is, therefore, the transformation function $h$ that linearizes the system in each domain;
 by changing the variables,  the nonlinear system (with variable $x$) becomes the linear system (with variable $y$),
\begin{eqnarray}
\dot{y}&=&Dh(x)\dot{x}=Dh( h^{-1}(y))F^{(1)}(h^{-1}(y)) =\nonumber \\
&=&\left(\frac{e^{-\frac{\lambda}{x}}}{x^2}\right)\biggr\rvert_{x=h^{-1}(y)}(x^2)|_{x=h^{-1}(y)}
=\left(\frac{e^{-\frac{\lambda}{x}}}{x^2}\right)\biggr\rvert_{x=\frac{-\lambda}{\log y} }(x^2)\rvert_{x=\frac{-\lambda}{\log y}  }
=e^{-\frac{\lambda}{x}}\rvert_{x=\frac{-\lambda}{\log y} }=  \nonumber \\
&=&y.
\end{eqnarray}
While this is an easy transformation to obtain (or maybe even to guess!) in each of the two half-lines , in its way it {\em is} a "Cole-Hopf-type" transformation, even if only
for a scalar ODE:  it takes a nonlinear problem to a linear one away from singularities.

\subsubsection{Example 2: Rectifying a Nonlinear Problem in 1D}
We again use the quadratic ODE from Eq.~(\ref{quad}) as our ${\mathbb X}$-system.
To rectify this problem, we need to match to the ${\mathbb Y}$-system,
\begin{equation}
\dot{y}=F^{(2)}(y)=1.
\end{equation} 
After the eigenfunctions are available, we will find below that $D=\mathbb{R}^+$ through Eq.~(\ref{intersects}). 
For the $\mathbb{Y}$-system, the flow is
\begin{equation}
y(t)=S_t^{(2)}(y_0)=t+y_0.
\end{equation}
The KEIG differential equation in this case becomes
\begin{equation}
\nabla g \cdot F^{(2)}(y)=\frac{dg}{dy} 1 =\lambda g(y),
\end{equation}
and there is a solution, $g_{\lambda^{(2)} }^{(2)} =c e^{\lambda^{(2)} x}$, $c=  e^{-\lambda^{(2)} x_0}{g_0}$. Again, we can neglect the constant $c$, hence  
\begin{equation}
g^{(2)}_{\lambda^{(2)} }(y)=e^{\lambda^{(2)} y}.
\end{equation}
For real valued $\lambda^{(2)}$ and $y$, the function $g^{(2)}$ has range $R_2=\mathbb{R}^+$.
The inverse of $z=g^{(2)}_{\lambda^{(2)} }(y)$ is $y=g^{(2),-1}_{\lambda^{(2)} }(z)=\frac{\ln z}{\lambda^{(2)} }$, which is only defined on the domain $D_1=\mathbb{R}^+$. This domain of the inverse of $g^{(2)}_\lambda$, together with the range $R_2$ yields the domain $D=\mathbb{R}^+$ of $h$ (see Eq.~\ref{intersects}). 
Then, 
\begin{equation}\label{eq: 33}
y=h(x)=g^{(2),-1}_{\lambda_2}\circ g^{(1)}_{\lambda^{(1)} }(x)=\frac{\ln\left(e^{\frac{-\lambda^{(1)} }{x}}\right)}{\lambda^{(1)} }=\frac{-\lambda^{(1)} }{\lambda^{(2)}  x },
\end{equation}
and,
\begin{eqnarray}
\dot{y}&=&Dh(x)\dot{x}=Dh( h^{-1}(y))F^{(1)}(h^{-1}(y)) \nonumber \\
&=&\frac{\lambda^{(1)} }{\lambda^{(2)} }\frac{1}{x^2}|_{x=h^{-1}(y)} (x^2)|_{x=h^{-1}(y)} \nonumber \\
&=&\frac{\lambda^{(1)} }{\lambda^{(2)} }=1,
\end{eqnarray}
because we assumed that the eigenvalues are equal (Cor.~\ref{cor1}).
In summary, the system can be rectified for $x\in D=\mathbb{R}^+$, so that $\dot{y}=1.$
It is easy to see that if we defined the inverse of $g^{(2)}$ in Eq.~(\ref{eq: 33}) for negative $z$ instead, the system can be rectified on $\mathbb{R}^-$. 
This result is in agreement with the flow-box theorem, \cite{perko2013differential, teschl2012ordinary, carmen2000ordinary, marsden2006texts}. Here, we have found the transformation explicitly through matching KEIGs.

\subsection{Constructing Orbit Equivalence in More Than One Dimension: Problems with Nontrivial Geometric Multiplicity}\label{defectsection}

A central requirement for matching systems through Cor.~(\ref{cor1}) 
is a systematic way to select the appropriate eigenfunctions, one pair for each eigenvalue. In dimensions greater than one, 
 this selection can be difficult because of the nontrivial geometric multiplicity. In general, eigenfunctions are unique up to an equivalence class of scalar, complex multiples. In the univariate setting, every eigenvalue is associated to a single equivalence class of eigenfunctions. However, in the multivariate setting, many such classes of eigenfunctions may be associated to a single eigenvalue, so that the eigenspace spanned by the eigenfunctions has a dimension larger than one.

This can be rationalized by considering the construction of the eigenfunctions through the KEIGs PDE.
In the univariate case, prescribing a multiple of the initial condition at a point will give rise to another eigenfunction within the trivial (scalar multiple) equivalence class, due to linearity of the PDE.
 In the multivariate setting, 
the KEIGs PDE depends on initial data $g_0(z)$ defined on a co-dimension-one set $\Sigma$, which has to be chosen so that the flow $S_t$ of the ODE is transverse to it. (Reminder: we reiterate here that a method to construct the set numerically is described in Appendix~\ref{sec:construction by numcont};
And it is standard, as reviewed in Appendix~\ref{solns}, that the solution curves $z(t)=S_t(z_0)$ of $\dot{z}=F(z), z_0\in \Sigma$, serve as characteristics along which the initial data $g_0(z):\Sigma \subset {\mathbb R}^d\rightarrow {\mathbb R}$ ``propagates". This propagation embodies the method of characteristics.) In general, any initial data function $g_0(z)$  that is compatible with the PDE and with the initial set, $\Sigma$, generates a solution. Hence, the set of functions ${\mathbb G}_\lambda\subset C^1$ that solve Eq.~(\ref{qpde}) for each given $\lambda \in {\mathbb C}$ may be uncountable. 
Formalizing this argument requires pursuing sharp conditions concerning the domain of the transformation $h$, and the regularity of the vector field $F$ concerning existence and uniqueness of global solutions of the ODE and the KEIGs PDE, Eq.~(\ref{qpde}); all nontrivial issues that we did not tackle in this paper.  
{A formal treatment of this problem is handled by the concept of open eigenfunctions (see Definition 4.1 and Lemma 2 in \cite{mezic-2017}).}

This nontrivial geometric multiplicity in dimensions higher than one presents a major difficulty in matching, 
leading us to make the following, weakened matching statement, describing the homeomorphim in terms of two \textit{sets} of eigenfunctions, one set for each system. 
Similar to Cor.~(\ref{cor1}), the statement is local, and the domain of definition for the transformation $h$ will be extended afterwards.
\begin{cor}\label{cor2}
  Consider invariant subsets ${\mathbb X}, {\mathbb Y}\subset {\mathbb R^d}$, with   continuous time flows  $S_t^{(1)}$ and $S_t^{(2)}$ that are topologically orbit equivalent by $h:{\mathbb X}\rightarrow {\mathbb Y}$, and that each has eigenfunctions $g_{\lambda_i}^{(1)}(x)$, and $g_{\lambda_i}^{(2)}(y)$, with eigenvalues $\lambda_i$, $i=1,...,d$. The eigenfunctions are organized in two (column) vector-valued functions $\mathcal{G}^{(1)}$ and $\mathcal{G}^{(2)}$,
\begin{eqnarray}
{\cal G}^{(1)}(x)=[g_{\lambda_1}^{(1)}(x), g_{\lambda_2}^{(1)}(x), ..., g_{\lambda_d}^{(1)}(x)]^T, \nonumber \\
{\cal G}^{(2)}(y)=[g_{\lambda_1}^{(2)}(y), g_{\lambda_2}^{(2)}(y), ..., g_{\lambda_d}^{(2)}(y)]^T,
\end{eqnarray}
consisting of $d$ matching pairs of eigenfunctions, each pair associated to its own eigenvalue $\lambda_i=\lambda_i^{(1)}=\lambda_i^{(2)}$, and related by the same homeomorphism $h$ as in Eq.~(\ref{relate}). 
%
We assume that the %
function ${\cal G}^{(2)}:{\mathbb R}^d\rightarrow {\mathbb R}^d$ is invertible in an open set around a point $y\in {\mathbb Y}$. Equivalently, we can assume that the Jacobian of ${\cal G}^{(2)}$ is nonsingular at the point $y$, so that there is an open set $A\subset {\mathbb X}$ with $x\in {A}$ such that $y=h(x)$ is defined for all $x\in A$. In this case, we call $({\cal G}^{(1)},{\cal G}^{(2)})$ a {\bf complete set of eigenfunctions}, and together with the relation by $h$ we call them a {\bf componentwise matched complete set of eigenfunctions}.
 Furthermore, when ${\cal G}^{(2),-1}$ exists, the transformation between the spaces may be written,
\begin{equation}\label{thecor}
y=h(x)={\cal G}^{(2),-1}\circ {\cal G}^{(1)}(x).
\end{equation}                  
Again the degree of regularity follows that of the vector valued functions ${\cal G}^{(j)}$.  
\end{cor}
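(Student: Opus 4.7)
The plan is to reduce the multivariate statement to a componentwise application of Theorem~\ref{thm1}, then invoke the inverse function theorem to obtain the local invertibility of $\mathcal{G}^{(2)}$, and finally solve for $h$ algebraically. Since the conclusion is essentially the vector-valued analogue of Cor.~\ref{cor1}, the skeleton of the argument should mirror that one-dimensional proof; the only genuinely new ingredient is handling a $d$-dimensional Jacobian rather than a scalar derivative.

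First, I would apply Theorem~\ref{thm1} to each of the $d$ matched pairs $(g_{\lambda_i}^{(1)}, g_{\lambda_i}^{(2)})$ separately. By hypothesis, each pair is related by the same homeomorphism $h$, so for every $i=1,\dots,d$ and every $x\in\mathbb{X}$ one has
\begin{equation}
g_{\lambda_i}^{(1)}(x) \;=\; g_{\lambda_i}^{(2)} \circ h(x).
\end{equation}
Stacking these $d$ scalar identities into the column vectors defined in the statement immediately gives the vector identity
\begin{equation}
\mathcal{G}^{(1)}(x) \;=\; \mathcal{G}^{(2)}\bigl(h(x)\bigr),
\end{equation}
which holds on all of $\mathbb{X}$. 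This is the single equation from which $h$ must be recovered.

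Next, I would use the assumption that the Jacobian $D\mathcal{G}^{(2)}(y)$ is nonsingular at the point $y=h(x)$. Since the $g_{\lambda_i}^{(2)}$ are at least $C^1$ on an open neighborhood of $y$ (this is built into the KEIG framework via Thm.~\ref{thm2}), the inverse function theorem applies and provides an open neighborhood $V\subset\mathbb{Y}$ of $y$ on which $\mathcal{G}^{(2)}:V\to \mathcal{G}^{(2)}(V)$ is a diffeomorphism onto its image, with a well-defined local inverse $\mathcal{G}^{(2),-1}$ of the same regularity class. Because $h$ is continuous, the preimage $A = h^{-1}(V)$ is an open neighborhood of $x$ in $\mathbb{X}$, and composing both sides of the stacked identity on the left with $\mathcal{G}^{(2),-1}$ yields
\begin{equation}
h(x) \;=\; \mathcal{G}^{(2),-1}\circ \mathcal{G}^{(1)}(x) \qquad \text{for all } x\in A,
\end{equation}
which is precisely Eq.~(\ref{thecor}). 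Regularity of the right-hand side is inherited from the regularity of $\mathcal{G}^{(1)}$ and $\mathcal{G}^{(2),-1}$, as stated.

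The main obstacle, as the text flags, is not the algebra but the legitimacy of the nonsingular-Jacobian hypothesis: in higher dimensions the eigenspace associated with a single $\lambda$ may be infinite-dimensional, so merely choosing $d$ eigenfunctions with $d$ matching eigenvalues does not guarantee that the assembled map $\mathcal{G}^{(2)}$ has full-rank Jacobian, let alone that the chosen pairs are actually ``matched'' by the same $h$ in the sense of Eq.~(\ref{relate}). That is exactly why the corollary must elevate these requirements into the definition of a \emph{componentwise matched complete set of eigenfunctions}. Once that definition is adopted, the proof reduces to the two clean steps above; the deeper question of how to select such a complete matched set from the eigenspaces in a principled way is deferred, as the authors indicate, to the subsequent discussion on restricting the function space of observables.
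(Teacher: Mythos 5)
Your proposal is correct and follows essentially the same route the paper takes: the hypothesis already furnishes the componentwise relation $g_{\lambda_i}^{(1)} = g_{\lambda_i}^{(2)}\circ h$ (so invoking Theorem~\ref{thm1} here is redundant rather than necessary), stacking gives $\mathcal{G}^{(1)} = \mathcal{G}^{(2)}\circ h$, and the inverse/implicit function theorem applied to the nonsingular Jacobian of $\mathcal{G}^{(2)}$ yields the local inverse and hence Eq.~(\ref{thecor}). Your closing remarks on why the nonsingularity and the ``matched'' requirement must be elevated into the definition also mirror the paper's own discussion of nontrivial geometric multiplicity.
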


Note that as in Corollary \ref{cor1}, the implicit function theorem allows for existence and continuation of an inverse in a neighborhood of a point $y$ where the Jacobian is nonsingular, but this time not for a single eigenfunction, but rather the function ${\cal G}^{(2)}$, that is, the complete set of ``stacked" eigenfunctions of the second system.  Perhaps another useful standard geometric interpretation of such a function is that the level sets of each  $g_{\lambda_j}^{(2)}$ will be transverse to $g_{\lambda_i}^{(2)}$, in $A$ when $i\neq j$. 
The same is true for $g_{\lambda_j}^{(1)}$ if $h$ is a diffeomorphism in $A$. 
To discuss the domain of $h$, we may adapt the notation from Corollary \ref{cor1} (Eqs.~(\ref{range1})-(\ref{intersects}))  to state the following generalization of Eq.~(\ref{intersects}):
\begin{equation}\label{dr2}
D(h,{\mathbb X})=\cap_{i=1}^d D(g_{\lambda_i}^{(2),-1}\circ g_{\lambda_i}^{(1)},{\mathbb X})= \cap_{i=1}^d g^{(1),-1}_{\lambda_i}
(R_{2,i}\cap {\mathbb R^d}).
\end{equation}
Stating Corollary~\ref{cor2} in terms of a matched complete set of eigenfunctions is crucial, because for this set, {\it it is assumed that there exists a  homeomorphism $h$ that relates across all of them.}  For the non-multiplicity scenario, where there exists exactly one eigenfunction for each eigenvalue, this assumption would be automatically satisfied if we can find appropriate eigenvalue pairs.  The next example (3) demonstrates a scenario where a set of matched, two-dimensional eigenfunctions is known explicitly. After this example, we will discuss how allowing ourselves to constrain the function space can alleviate the nontrivial geometric multiplicity problem.

\subsubsection{Example 3:  Factorizing a Quadratic Transformation in 2D}

We now demonstrate the ``matching'' of a two-dimensional nonlinear system to a diagonal, linear system of the same dimension.
Finding the homeomorphism $h$ in this example also factorizes (diagonalizes) the nonlinear dynamical system.
Consider the following nonlinear example in a two dimensional domain.
Let $x(t)=(x_1(t),x_2(t))^T\in {\mathbb R}^2$ evolve according to,
\begin{eqnarray}\label{F1}
\dot{x}&=&F^{(1)}(x)=
\left(
\begin{array}{c}
-2 a_2 x_2(x_1^2-x_2-2x_1x_2^2+x_2^4)+a_1(x_1+4x_1^2x_2-x_2^2-8x_1x_2^3+4x_2^5) \\
2 a_1(x_1-x_2^2)^2-a_2(x_1^2-x_2-2x_1x_2^2+x_2^4 )
\end{array}
\right) \nonumber \\
&=&
\left(
\begin{array}{cc}
(x_1+4x_1^2x_2-x_2^2-8x_1x_2^3+4x_2^5) & -2  x_2(x_1^2-x_2-2x_1x_2^2+x_2^4) \\
2 (x_1-x_2^2)^2 & (x_1^2-x_2-2x_1x_2^2+x_2^4 )
\end{array}
\right)
\left(
\begin{array}{c}
a_1 \\ a_2 \end{array}\right)
\end{eqnarray}
The linear PDE, Eq.~(\ref{qpde}), becomes,
\begin{eqnarray}
\nabla g \cdot F^{(1)}(x)=&\frac{\partial g}{\partial x_1}(z)&[-2 a_2 x_2(x_1^2-x_2-2x_1x_2^2+x_2^4)+a_1(x_1+4x_1^2x_2-x_2^2-8x_1x_2^3+4x_2^5)]+ \nonumber \\
+ &\frac{\partial g}{\partial x_2}(z)&[2 a_1(x_1-x_2^2)^2-a_2(x_1^2-x_2-2x_1x_2^2+x_2^4 )]=\lambda g(x).
\end{eqnarray}
By substitution, we can confirm that there are  (at least two) solutions to this PDE,
\begin{eqnarray}
g_1^{(1)}(x)&=&(x_1-x_2^2) \label{KEIGSq1}\\
g_2^{(1)}(x)&=&(-x_1^2+x_2+2x_1x_2^2-x_2^4).\label{KEIGSq2}
\end{eqnarray}
By using the solution of the ODE, 
\begin{eqnarray}
x(t)&=&\left(\begin{array}{c}x_1(t)\\x_2(t)\end{array}\right)
\end{eqnarray}
where,
\begin{eqnarray}
x_1(t)=
e^{a_1 t}(x_{1,0}-x_{2,0}^2)+e^{2a_2t}(-x_{1,0}^2+x_{2,0}+2x_{1,0}x_{2,0}^2-x_{2,0}^4)+ \nonumber \\
 +2e^{(2a_1+a_2)t}(x_{1,0}-x_{2,0}^2)^2(-x_{1,0}^2+x_{2,0}+2x_{1,0}x_{2,0}^2-x_{2,0}^4)+ \nonumber \\
+ e^{4a_1 t}(x_{1,0}-x_{2,0}^2)^4, \nonumber \\
x_2(t)=e^{2a_1t}(x_{1,0}-x_{2,0}^2)^2+e^{a_2 t} (-x_{1,0}^2+x_{2,0}+2x_{1,0}x_{2,0}^2-x_{2,0}^4)
,
\end{eqnarray}
we can confirm by definition (\ref{eq:koopman definition}) that these are KEIGs.  In Fig.~\ref{figq}, we show the level sets of these KEIGs, Eq.~(\ref{KEIGSq1},\ref{KEIGSq2}).
It turns out, by our own construction, that they are in fact a matched complete set. We confirm this formally, by showing that they allow the construction of a transformation function $h$ matching this to the diagonal, linear system (\ref{lineardiag}).

\begin{figure}[hpt]     
	\centering   
        	\includegraphics[width=0.5\textwidth]{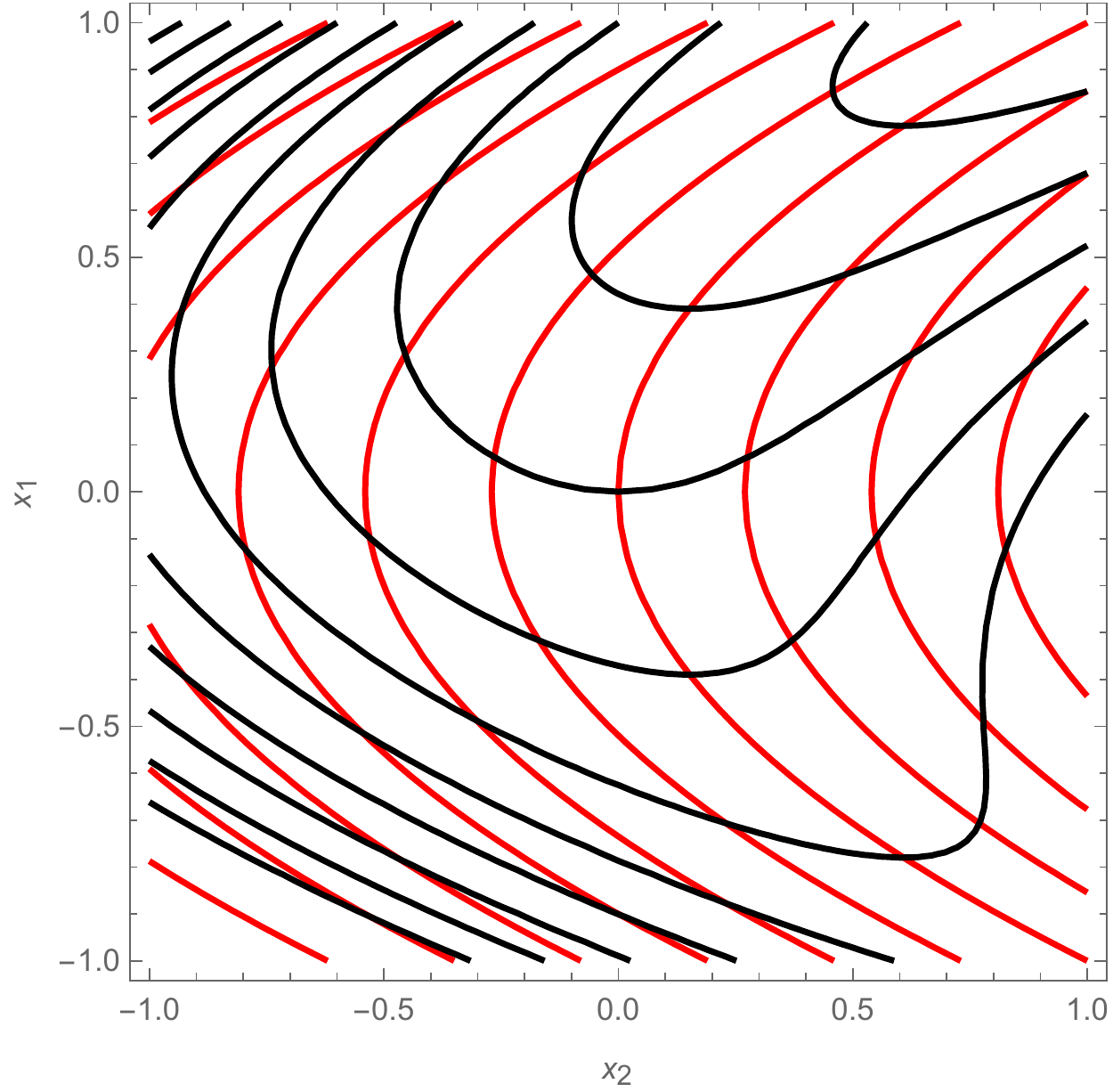}
       	\caption{ Level sets of the KEIGS $g_1(x)$ (red) and $g_2(x)$ (black) from Eq.~(\ref{KEIGSq1},\ref{KEIGSq2}).   }
	\label{figq}
\end{figure}

 Let $y(t)=[y_1(t),y_2(t)]^T$, and let 
\begin{equation}\label{lineardiag}
\dot{y}=A y=\mathrm{diag}(a_1,a_2) y.
\end{equation}
We know that this linear system has eigenfunctions that are state observers of each component, 
\begin{eqnarray}
g_1^{(2)}(y)&=&[y]_1=y_1\label{KEIGSqlin1} \\
g_2^{(2)}(y)&=&[y]_2=y_2.\label{KEIGSqlin2}
\end{eqnarray}
Defining the vector valued function, 
\begin{equation}\label{spf0}
{\cal G}^{(2)}(y)=[g_{\lambda_1}^{(2)}(y), g_{\lambda_2}^{(2)}(y)]^T=[y_1, y_2]^T,
\end{equation}
and using this as the matching complete set, Cor.~\ref{cor2} specializes to
\begin{eqnarray}\label{spf}
y&=&h(x)= \nonumber \\
&=&{\cal G}_{\lambda}^{(2),-1}\circ {\cal G}_{\lambda}^{(1)}(x)={\cal G}_{\lambda}^{(1)}(x)=[g_{\lambda_1}^{(1)}(x), g_{\lambda_2}^{(1)}(x)]^T= \nonumber \\
&=&[(x_1-x_2^2), (-x_1^2+x_2+2x_1x_2^2-x_2^4)]^T.
\end{eqnarray}                  
The inverse of the transformation follows,
\begin{equation}
x=h^{-1}(y)=[y_1+y_1^4+2y_1^2y_2+y_2^2, y_1^2+y_2]^T.
\end{equation}
With this transformation $h$, we confirm that 
\begin{equation}\label{transform}
\dot{y}=Dh(x)|_{x=h^{-1}(y)} \dot{x}_x=h^{-1}(y)=Dh(x)|_{x=h^{-1}(y)} F^{(1)}(x)_{x=h^{-1}(y)}.
\end{equation}
 Notice that written this way, $Dh$ is a Jacobian matrix since $h$ has a two-dimensional domain and range.
 It is straightforward to see that
\begin{equation}
Dh(x)=[\nabla g_{\lambda_1}^{(1)}(x), \nabla g_{\lambda_2}^{(1)}(x)]^T,
\end{equation}
is a matrix whose rows are gradients of the eigenfunctions of the $F_1$ system, Eq.~(\ref{F1}).
It then follows that
\begin{eqnarray}
\dot{y}&=&[\nabla g_{\lambda_1}^{(1)}(x): \nabla g_{\lambda_2}^{(1)}(y)]^T F_1(x) = \nonumber \\
&=& [\nabla g_{\lambda_1}^{(1)}(x)\cdot F^{(1)}(x), \nabla g_{\lambda_2}^{(1)}(x)\cdot F^{(1)}(x)]^T= \nonumber \\
&=& [ a_1 g_{\lambda_1}^{(1)}(x), a_2 g_{\lambda_2}^{(1)}(x) ]^T =\ \text{(transforming now to $y=h(x)$)} \nonumber \\
&=& [ a_1 g_{\lambda_1}^{(1)}(x), a_2 g_{\lambda_2}^{(1)}(x) ]^T |_{x=h^{-1}(y)} \nonumber \\
&=& [ a_1 g_{\lambda_1}^{(2)}(y), a_2 g_{\lambda_2}^{(2)}(y) ]^T \nonumber \\
&=& [a_1 y_1, a_2 y_2]^T=\mathrm{diag}(a_1,a_2) y.
\end{eqnarray}
The second step is, remarkably, a crisp restatement of the KEIG PDE (Thm.~\ref{thm2}) that appears naturally in the change of variables statement.  The step from the third to the fourth line follows the matching of the KEIGs described as state observers.
This line emphasizes the matching problem where we expand the description, 
\begin{equation}
[ a_1 g_{\lambda_1}^{(1)}(x), a_2 g_{\lambda_1}^{(1)}(x) ]^T |_{x=h^{-1}(y)}=[a_1 y_1, a_2 y_2]^T.
\end{equation}
This crucially relies on  the assumption 
that the eigenfunctions are a matched complete set, all related by the same homeomorphism. We know that this can fail to be true even if the eigenvalues are equal (as illustrated in Appendix \ref{defectexample}).

At this point, we confess that in the above example we knew that we had a matched complete pair, because the nonlinear system was constructed through a transformation of the linear system in the first place; we then ``threw away'' the transformation, so that we could (re)discover it through Cor.~\ref{cor2}.
%
%
  In Section \ref{defect}, we show that if we sufficiently restrict the function space over which the solutions of the KEIGs PDE exist, we can guarantee that the associated Koopman operator no longer has nontrivial geometric multiplicity. In the appropriately restricted context, we can recover the same homeomorphism $h$ for the example discussed here, without the additional assumption of a matched complete set. The overall approach is generically applicable, and even has a numerical extension, which we describe in Sec.~(\ref{sec:num example}).

%
%

\subsubsection{Example 4:  Rectifying the Linear Problem in 2D}

We now demonstrate the rectification of the diagonal linear problem in two dimensions (here: on $\mathbb{R}^+\times\mathbb{R}^+$; other quadrants will also work.  See below). Obviously, if successful, we can then rectify the nonlinear problem of the previous section through the composition of the two transformations (in the appropriate domain).
We will use the KEIGs to transform the linear diagonal system, Eq.~(\ref{lineardiag2}) into the rectified system (Eq.~\ref{rect2}). The linear system is again
\begin{equation}\label{lineardiag2}
\dot{x}=F^{(1)}(x)=\mathrm{diag}(a_1,a_2) x,
\end{equation}
with $S^{(1)}_t(x_0)=[x_{0,1}e^{a_1 t}, x_{0,2}e^{a_2 t}]^T$. The vector valued functions are (see Eq.~(\ref{KEIGSqlin1},\ref{KEIGSqlin2})), 
\begin{equation}\label{spf0}
G^{(1)}(y)=[g_{\lambda_1}^{(1)}(x), g_{\lambda_2}^{(1)}(y)]^T=[x_1, x_2]^T.
\end{equation}
To rectify the system, we choose the following system as the target to match to:
\begin{equation}\label{rect2}
\dot{y}=F^{(2)}(y)=[1, 0]^T,
\end{equation}
so that the flow is, $S^{(2)}_t(y_0)=[y_{0,1}+t, y_{0,2}]^T$. The corresponding KEIGs PDE becomes,
\begin{equation}
\nabla g \cdot F_2(y)=\frac{\partial g}{\partial y_1} 1 + \frac{\partial g}{\partial y_2} 0=\lambda g,
\end{equation}
which has a solution,
\begin{equation}
g_\lambda^{(2)}(y)=e^{\lambda^{(2)} y_1+q(y_2)},
\end{equation}
where, $q$ is an arbitrary $C^1$ function, which can be considered as representing initial data on an initial set that is transverse to the flow (see the discussion at the beginning of Sec.~\ref{defectsection}). Specifically, 
 this initial set could be taken to be the $y_2$-axis, $S=\{(y_1,y_2): y_1=0\}$, which is clearly transverse to the flow, and $q(y_2)=g_0(0,y_2)$ represents the initial data. This choice is consistent with the PDE, which has trivial characteristics. 
We define the vector valued KEIGs, with $y\in {\mathbb R}^2$, to be
\begin{equation}\label{g2i1}
{\cal G}^{(2)}_\lambda(y)=[e^{\lambda_1^{(2)} y_1}e^{q(y_2)}, e^{\lambda_2^{(2)} y_1}e^{q(y_2)}]^T,
\end{equation}
where the vector of eigenvalues $\lambda=[\lambda_1^{(2)},\lambda_2^{(2)}]^T$ must be chosen so that $\lambda_1^{(2)}\neq \lambda_2^{(2)}$, to ensure a complete set of KEIGs.
The inverse follows, in $\mathbb{R}^+\times\mathbb{R}^+$ (for positive $z_1$ and $z_2$),
\begin{equation}\label{g2i2}
y={\cal G}^{(2),-1}_\lambda(z)=\left[\frac{\log z_1-\log z_2}{\lambda_1^{(2)}-\lambda_2^{(2)}}, q^{-1}\left(\frac{\lambda_1^{(2)} \log z_2 -\lambda_2^{(2)} \log z_1}{\lambda_1^{(2)}-\lambda_2^{(2)}}\right)\right]^T,
\end{equation}
when the function $q$ is invertible.  For simplicity, choose $q$ to be the identity function. It is important to note that different, invertible functions $q$ would give different transformations, and thus different rectifications (see Fig.~\ref{fig:different initial functions}). 
\begin{figure}[ht]
\centering
\includegraphics[width=0.9\textwidth]{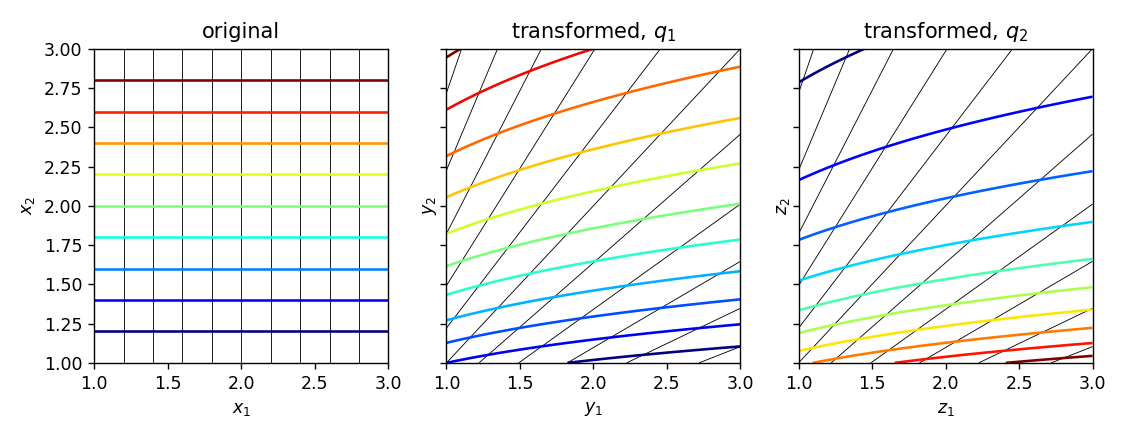}
\caption{\label{fig:different initial functions}Choosing different initial values on the initial set leads to different homeomorphisms $h$. The plots show the original coordinates (left) with $x_1$ level sets as thin black lines, and $x_2$ level sets as colored lines. The coordinates resulting from the homeomorphisms $h$ (Eq.~\ref{g2i2}) are shown in the middle and right plot, with $q_1^{-1}=x$ and $q_2^{-1}=\ln(10+\exp(-x))$. Notice that changing $q$ results in a system with the same differential equations, but different coordinates. }
\end{figure}
Then following Corollary \ref{cor2}, Eq.~(\ref{thecor}),
\begin{equation}
y=h(x)={\cal G}^{(2),-1}\circ {\cal G}^{(1)}(x)=\left[\frac{\log x_1-\log x_2}{\lambda_1^{(2)}-\lambda_2^{(2)}}, \frac{\lambda_1^{(2)} \log x_2 -\lambda_2^{(2)} \log x_1}{\lambda_1^{(2)}-\lambda_2^{(2)}}\right]^T,
\end{equation}
for $x\in(\mathbb{R}^+\times \mathbb{R}^+)=:D$, the domain of $h$. 
%
 As a check that this change of variables rectifies the problem, when the Jacobian derivative $Dh|_x$ exists at points $x=h^{-1}(y)$, we see that
\begin{eqnarray}
\dot{y}&=&Dh|_x \cdot \dot{x}=Dh|_{x=h^{-1}(y)} \cdot F^{(1)}(x)|_{x=h^{-1}(y)}=
\frac{1}{\lambda_1^{(2)}-\lambda_2^{(2)}} \left(
\begin{array}{cc}
\frac{1}{x_1} & \frac{-1}{x_2} \\
\frac{-\lambda_2^{(2)}}{x_1} & \frac{\lambda_1^{(2)}}{x_2}
\end{array}
\right)
\left(
\begin{array}{c}
a_1 x_1 \\ a_2 x_2 \end{array}
\right)=  \nonumber \\
&=&      
\frac{1}{\lambda_1^{(2)}-\lambda_2^{(2)}}\left(
\begin{array}{c}
a_1-a_2 \\ \lambda_1^{(2)} a_2-\lambda_2^{(2)} a_1
\end{array}
\right)= \nonumber \\
&=& 
\left(
 \begin{array}{c}
1 \\ 0
\end{array}\right), \mbox{ if }\lambda_1^{(2)}=a_1, \mbox{ and, }\lambda_2^{(2)}=a_2, \mbox{ but only if, }a_1\neq a_2.
\end{eqnarray}
Thus the linear system can be rectified, if the eigenvalues $\lambda_1^{(2)}$ and $\lambda_2^{(2)}$ are chosen to match the eigenvalues of the Jacobian of the linear system, at least in the spatial domain where all of the transformations exist and are continuously differentiable. In this example, the domain of $h$ is the first quadrant of $\mathbb{R}^2$, considering the logarithms involved in Eq.~(\ref{g2i2}). Other quadrants are also possible, depending on the choice of $\mathcal{G}^{(2)}$. Since in Example~(3) we showed that the nonlinear example can be linearized, and thus by composition this  nonlinear example can be rectified on the domain $D$ of $h$. 
See Fig.~\ref{fig:composition of transforms}.
\begin{figure}[ht!]
\centering
\includegraphics[width=0.5\textwidth]{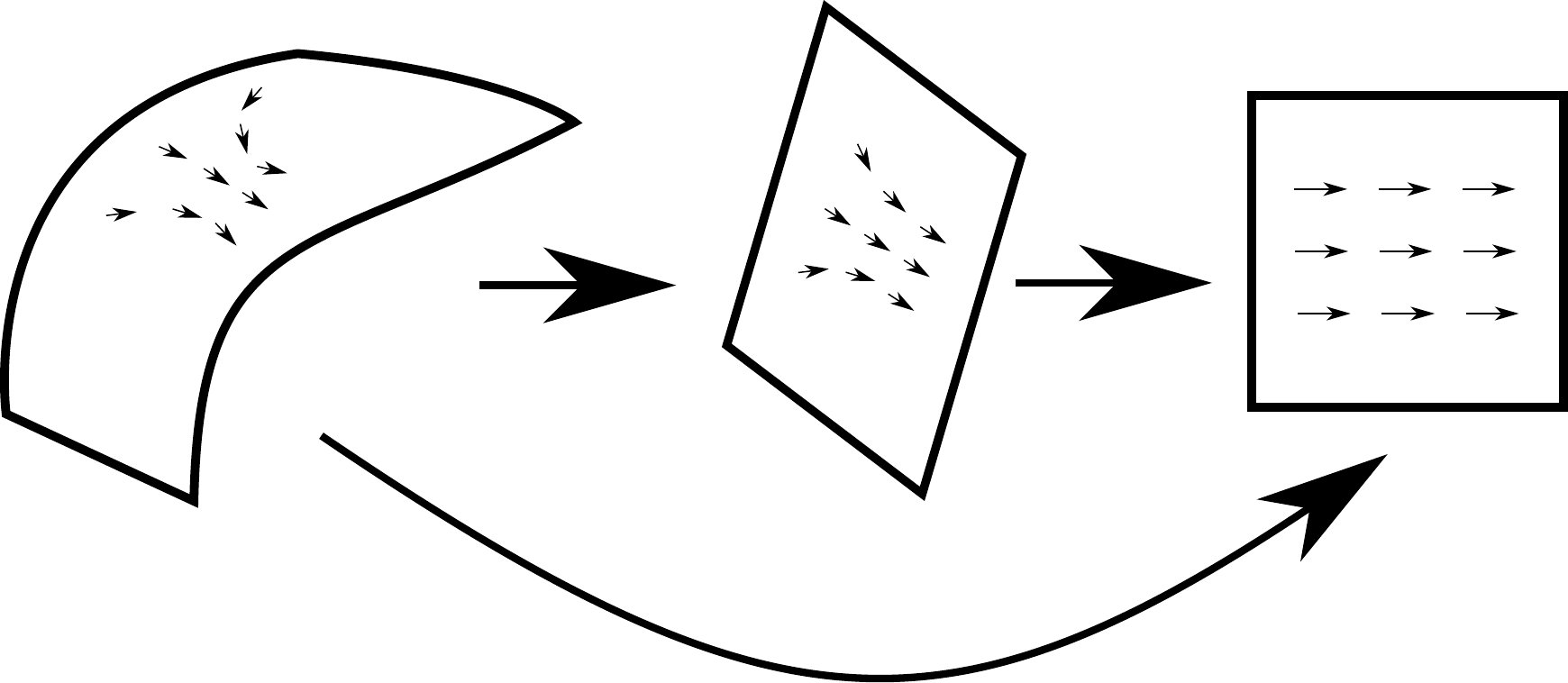}
\caption{\label{fig:composition of transforms}Schematic representation of the rectification of a nonlinear system (left), either by matching to a linear system first (middle), and rectifying the linear system (right), or directly matching the nonlinear to the rectified system.}
\end{figure}
Indeed, matching to a reference system provides a systematic way of matching two arbitrary systems. Here, a simple system such as a linear system or a ``flow box'' system (meaning a system with constant vector field) is sufficient.  It reminds the authors of matching between two dynamical systems by matching each one of them to the appropriate normal form.

\section{An Approach to Circumvent Nontrivial Geometric Multiplicity}\label{defect}

The previous section discussed the problem of nontrivial geometric multiplicity: 
Building complete sets of eigenfunctions does not necessarily imply that they are matched (that each pair is related by the same homeomorphism).
Consider, for example, the case of the identity transformation $h$ to a perfect copy of a system. 
By the nontrivial geometric multiplicity, 
a complete set of eigenfunctions $G^{(1)}(x)$ may be developed for system $F^{(1)}$, and likewise  $G^{(2)}(x)$ for system $F^{(2)}=F^{(1)}$,  but $G^{(1)}\neq G^{(2)}$. If a solution to the equation 
$h(x)=G^{(2),-1}\circ G^{(1)}(x)$ exists, it may not be the identity function, $h(x)=x$ (see Appendix \ref{defectexample} for an explicit example).  Clearly, \textbf{nontrivial geometric multiplicity is a major obstacle for systematic spectral matching of dynamical systems} through KEIGs. 
If there is a solution to the spectral matching problem, then it is necessary to have a way to systematically select  representatives $g_{\lambda_i} \in {\mathbb G}_{\lambda_i}$ in a consistent way, leading to a complete \textit{matched} set (see Cor.~\ref{cor2}), where ${\mathbb G}_{\lambda_i}$ is the set of all admissible eigenfunctions for a given eigenvalue $\lambda_i$. 
We will show here that this can be circumvented by sufficiently restricting the function space. 


\subsection{The EDMD-M framework}

The approach we propose bears conceptual and algorithmic similarities with a computational algorithm for approximating Koopman eigenfunctions known as EDMD, Extended Dynamic Mode Decomposition\cite{williams-2015,williams-2015b,li-2017}. For that reason, we will call our approach EDMD-M, where ``M'' stands for Matching.
An important step in EDMD is the (computationally practical) choice of a finite dictionary. In our case, this choice does not conceptually have to be finite, although it will become so in certain computational implementations.
The crucial additional step in EDMD-M is the selection of \textit{the same} dictionary for the two systems to be matched, and the assumption that the matching function $h$ lies (possibly approximately) in the span of this dictionary.

Consider two dynamical systems,
\begin{eqnarray}
\dot{z}^{(1)} & = & F^{(1)}(z^{(1)}),\label{eq:sys_1}\\
\dot{z}^{(2)} & = & F^{(2)}(z^{(2)}).\label{eq:sys_2}
\end{eqnarray}
both in $\mathbb{R}^{d}$, for which there exists
some $h:\mathbb{R}^{d}\rightarrow\mathbb{R}^{d}$ such that the two systems are topologically orbit equivalent, through 
$z^{(2)}=h(z^{(1)}).$
%
%
One way to circumvent the problem of nontrivial geometric multiplicity is to
consider finite-dimensional projections of the two corresponding Koopman operators.
Let $\mathcal{K}_{F^{(i)}}$ denote the two Koopman operators arising from the corresponding two infinitesimal generators $A_{\mathcal{K}}^{(i)}$, $i=1,2$. In addition, denote by $\mathcal{C}_h$ the composition operator associated with the map $h$, i.e.
\begin{align}
\mathcal{K}_{F^{(i)}} \phi &:= \phi \circ A_{\mathcal{K}}^{(i)} \\
\mathcal{C}_h \phi &:= \phi \circ h.
\end{align}
Fixing $N$, consider a set of functions $\Psi:=(\psi_{1},\psi_{2},\dots,\psi_{N})$, $\psi_{j}\in \mathcal{F}$ and denote their span
\begin{equation}
U:=\mathrm{Span}(\Psi) = \left\{\sum_{j=1}^{N}a_j\psi_j,a\in\mathbb{C}^N\right\}.
\end{equation}
We make the following crucial assumptions

\begin{enumerate}
	\item[(A1)] $\mathcal{K}_{F^{(i)}}U \subseteq U$ for $i=1,2$. 
	\item[(A2)] $\mathcal{C}_{h}U \subseteq U$.
	\item[(A3)] $\{\psi_1,\dots,\psi_N\}$ are linearly independent.
\end{enumerate}Here, (A1) states that the projection of the Koopman dynamics on $U$
is closed and (A2) states that the composition mapping by $h$ is also
closed on $U$. Let us denote the restrictions (to $U$) of $\mathcal{K}_{F^{(i))}}$
by $K^{(i)}\in\mathbb{C}^{N\times N}$ and $\mathcal{C}_{h}$
by $H\in\mathbb{C}^{N\times N}$ i.e., for any $\phi\in U$ with 
\begin{equation}
\phi=a^{T}\Psi=\sum_{j=1}^{N}a_{j}\psi_{j},
\end{equation}
we have 
\begin{equation}
\mathcal{K}_{F^{(i)}}\phi=a^{T}K^{(i)}\Psi\qquad\mathcal{C}_{h}\phi=a^{T}H\Psi.\label{eq:proj_def}
\end{equation}
From ``usual EDMD'' analysis\cite{williams-2015}, it is clear that if $v_{\lambda}^{(i)}$
is a left eigenvector of $K^{(i)}$ with eigenvalue $\lambda$,
then
\begin{equation}
g_{\lambda}^{(i)}=v_{\lambda}^{(i),T}\Psi\in U
\end{equation}
is an eigenfunction of $\mathcal{K}_{F^{(i)}}$ with the
same eigenvalue $\lambda$. Let us prove a converse statement:
\begin{lem}
\label{lem:lemma1}Let $i\in\left\{ 1,2\right\} $, $\lambda\in\mathbb{C}$
and $g_{\lambda}\in U$ such
that $\mathcal{K}_{F^{(i)}}g_{\lambda}=\lambda g_{\lambda}$.
Suppose assumptions (A1) and (A3) hold. Then, there exists a unique
$v_{\lambda}\in\mathbb{C}^{N}$ such that $g_{\lambda}=v_{\lambda}^{T}\Psi$
and $v_{\lambda}^{T}K^{(i)}=\lambda$v$_{\lambda}^{T}$. \end{lem}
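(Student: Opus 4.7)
The plan is to unpack the definitions and then use linear independence. Since $g_\lambda \in U = \mathrm{Span}(\Psi)$, the very definition of span gives existence of some $v_\lambda \in \mathbb{C}^N$ with $g_\lambda = v_\lambda^T \Psi$. Uniqueness of such a $v_\lambda$ is then immediate from assumption (A3): if $v_\lambda^T \Psi = \tilde v_\lambda^T \Psi$, then $(v_\lambda - \tilde v_\lambda)^T \Psi = 0$, and the linear independence of $\{\psi_1,\dots,\psi_N\}$ forces $v_\lambda = \tilde v_\lambda$. Note that (A2) plays no role at this stage; only (A1) and (A3) are needed, which matches the hypotheses of the lemma.

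Next I would combine the two expressions for $\mathcal{K}_{F^{(i)}} g_\lambda$. On one side, the hypothesis $\mathcal{K}_{F^{(i)}} g_\lambda = \lambda g_\lambda$ gives $\mathcal{K}_{F^{(i)}} g_\lambda = \lambda v_\lambda^T \Psi$. On the other side, since $g_\lambda = v_\lambda^T \Psi \in U$ and assumption (A1) guarantees $\mathcal{K}_{F^{(i)}} U \subseteq U$, the definition of the matrix representation $K^{(i)}$ in Eq.~(\ref{eq:proj_def}) yields
\begin{equation}
\mathcal{K}_{F^{(i)}} g_\lambda = v_\lambda^T K^{(i)} \Psi.
\end{equation}
Equating these two expressions produces $\bigl(v_\lambda^T K^{(i)} - \lambda v_\lambda^T\bigr) \Psi = 0$.

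Finally, I would invoke (A3) once more. The displayed equation says that a particular linear combination of $\psi_1,\dots,\psi_N$ with coefficient vector $v_\lambda^T K^{(i)} - \lambda v_\lambda^T$ is the zero function; by linear independence, the coefficient vector itself must vanish, giving $v_\lambda^T K^{(i)} = \lambda v_\lambda^T$, i.e.\ $v_\lambda$ is a left eigenvector of $K^{(i)}$ with eigenvalue $\lambda$.

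There is no real obstacle to overcome here: the content is essentially bookkeeping about what it means to restrict a linear operator to an invariant finite-dimensional subspace in a fixed basis. The only subtle point to keep straight is the indexing convention, namely that \emph{left} eigenvectors of $K^{(i)}$ (rather than right) appear because $K^{(i)}$ acts on coordinate vectors on the right in the chosen convention $\mathcal{K}_{F^{(i)}}\phi = a^T K^{(i)} \Psi$; this matches the forward-direction statement already noted in the paragraph preceding the lemma, so the converse naturally lives in the same ``left'' setting.
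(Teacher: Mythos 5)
Your proof is correct and follows essentially the same route as the paper's: obtain $v_\lambda$ from $g_\lambda \in U = \mathrm{Span}(\Psi)$, get uniqueness from (A3), use (A1) with the definition of $K^{(i)}$ to rewrite $\mathcal{K}_{F^{(i)}}g_\lambda$ as $v_\lambda^T K^{(i)}\Psi$, and then invoke (A3) again to equate coefficient vectors. The remarks about (A2) being unused and about why left eigenvectors arise are accurate and helpful, but the substance of the argument is identical.
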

\begin{proof}
Since $g_{\lambda}^{(i)}\in U$, we write 
\begin{equation}
g_{\lambda}=v_{\lambda}^{T}\Psi
\end{equation}
for some $v_{\lambda}\in\mathbb{C}^{N}$, which is unique since $\left\{ \psi_{j}\right\} $
are linearly independent (A3). Assuming (A1), using the definition
of $K^{(i)}$ (\ref{eq:proj_def}) we have for each $z$
\begin{equation}
\lambda v_{\lambda}^{T}\Psi(z)=\lambda g_{\lambda}(z)=\mathcal{K}_{F^{(i)}}g_{\lambda}(z)=v_{\lambda}^{T}K^{(i)}\Psi(z).
\end{equation}
Hence, 
\begin{equation}
\left[\lambda v_{\lambda}^{T}-v_{\lambda}^{T}K^{(i)}\right]\Psi(z)=0
\end{equation}
for all $z\in\mathbb{R}^{d}$. By linear independence (A3) we have
\begin{equation}
\lambda v_{\lambda}^{T}=v_{\lambda}^{T}K^{(i)}.
\end{equation}
\end{proof}
With lemma~(\ref{lem:lemma1}), we can prove the following theorem that guarantees
the reconstruction of the function $h$ using \textit{separate} spectral analysis
of systems (1) and (2), admittedly under restrictive conditions. In particular,
we make the additional assumptions:

\begin{enumerate}
	\item[(A4)] The coordinate-projection maps $P_k(z)=[z]_k=z_k$, $k=1,\dots,d$ belong to $U$, i.e. there exists $B\in\mathbb{R}^{d\times N}$ s.t. $P=B\Psi$.
	\item[(A5)] We have access to a pair of matching values $z^{(1)}_0,z^{(2)}_0=h(z^{(1)}_0)\in\mathbb{R}^d$, such that 
	\begin{equation}
	\sum_{k=1}^N V_{jk}^{(1)}\psi_k(z^{(2)}_0)\neq0
	\end{equation} for all $j=1,\dots,N$, where $V^{(i)}$ a matrix whose $j^{th}$ row is a left eigenvector of $K^{(i)}$.
\end{enumerate}
The matching pair required in (A5) fixes the constant factors of the eigenfunctions, and this selects single elements from each equivalence classes (as also noted in\cite{williams-2015b}). This selection can be interpreted as a \textit{pinning condition} for symmetries that might be present in the systems.
\begin{thm}\label{thm:edmd-m}
Consider a dictionary $\Psi$ for which assumptions (A1)-(A3) are satisfied.
Suppose further that for each $i=1,2$, $K^{(i)}$ is diagonalizable with distinct eigenvalues. Then, 
\begin{enumerate}
	\item[(i)] $K^{(1)}$ and $K^{(2)}$ are similar.
\end{enumerate}
Assume further (A4), (A5) are satisfied, then
\begin{enumerate}
	\item[(ii)] $h(z)=B\left[V^{(2)}\right]^{-1}DV^{(1)}\Psi(z)$. 
\end{enumerate}
where $V^{(i)}$'s rows are eigenvectors with matched eigenvalues and $D$ is a diagonal matrix with entries
\begin{equation}
D_{jj}=\frac{\sum_{k=1}^N V_{jk}^{(2)}\psi_k(z^{(2)}_0)}{\sum_{k=1}^N V_{jk}^{(1)}\psi_k(z^{(1)}_0)}.
\end{equation}
\label{thm:edmd_m}
\end{thm}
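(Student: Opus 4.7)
The argument hinges on two observations: (A1)--(A2) together with Theorem~\ref{thm1} force the spectra of the matrix representations $K^{(1)}$ and $K^{(2)}$ to coincide, while the distinct-eigenvalue hypothesis reduces the ``nontrivial geometric multiplicity'' obstruction to fixing a single scalar per matched eigenfunction pair. Part~(i) then follows from diagonalizability; part~(ii) follows by explicitly determining those scalars via (A5) and extracting coordinate projections via (A4).

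For part~(i), I would pick an arbitrary eigenvalue $\lambda$ of $K^{(2)}$ with left eigenvector $v_\lambda$. Lemma~\ref{lem:lemma1} (applied to system~2) guarantees that $g^{(2)}_\lambda := v_\lambda^T \Psi \in U$ is an eigenfunction of $\mathcal{K}_{F^{(2)}}$ at $\lambda$. Theorem~\ref{thm1} then produces $g^{(2)}_\lambda \circ h$ as an eigenfunction of $\mathcal{K}_{F^{(1)}}$ at the same $\lambda$, and (A2) puts it back inside $U$. The converse direction of Lemma~\ref{lem:lemma1}, now applied to system~1, certifies that $\lambda$ is also an eigenvalue of $K^{(1)}$. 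Since both matrices are $N\times N$ and each has $N$ distinct eigenvalues, their spectra must coincide; diagonalizability then sends both to the same diagonal matrix, so $K^{(1)}$ and $K^{(2)}$ are similar.

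For part~(ii), I would index the rows of $V^{(1)}$ and $V^{(2)}$ by a common ordering of the shared eigenvalues $\lambda_1,\dots,\lambda_N$ and denote the associated eigenfunctions $g^{(i)}_j$. The argument above shows that $g^{(2)}_j \circ h$ and $g^{(1)}_j$ both lie in the one-dimensional $\lambda_j$-eigenspace of $\mathcal{K}_{F^{(1)}}$ within $U$ (one-dimensionality comes from Lemma~\ref{lem:lemma1} plus distinct eigenvalues of $K^{(1)}$), so they differ by a scalar $c_j$. Evaluating the identity $g^{(2)}_j \circ h = c_j g^{(1)}_j$ at the matched pair $(z^{(1)}_0,z^{(2)}_0)$ from (A5) solves $c_j$ explicitly, and inspection shows $c_j = D_{jj}$. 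Packaging the $N$ scalar relations into one matrix equation produces $V^{(2)}\Psi(h(z)) = D V^{(1)}\Psi(z)$. Inverting $V^{(2)}$ (nonsingular since $K^{(2)}$ is diagonalizable with distinct eigenvalues) and then applying $B$ from (A4) to both sides contracts $B\Psi$ to the coordinate projection $P$, so the left side becomes $P(h(z))=h(z)$, yielding the claimed formula.

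\textbf{Anticipated obstacle.} The genuinely delicate step is ensuring that objects living ``in system~1'' and ``in system~2'' can be compared in the same ambient space at all; this is exactly what (A2) quietly buys us by placing $g^{(2)}_\lambda \circ h$ back inside $U$. Everything else is careful row/column bookkeeping for the left eigenvectors together with verifying that (A5) renders every denominator appearing in $D_{jj}$ nonzero (one notices that the intended pinning condition really amounts to the non-vanishing of $\sum_k V^{(i)}_{jk}\psi_k(z^{(i)}_0)$ for both $i=1,2$, so (A5) is best read as two such conditions). The deeper ``matched complete set'' headache confronted in Cor.~\ref{cor2} never resurfaces, because the distinct-eigenvalue hypothesis collapses every eigenspace to a line and leaves only an overall scale to pin down, which (A5) does.
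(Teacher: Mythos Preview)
Your proposal is correct and follows essentially the same approach as the paper: transport eigenfunctions of $K^{(2)}$ through $h$ via Theorem~\ref{thm1}, use (A2) and Lemma~\ref{lem:lemma1} to land on eigenvectors of $K^{(1)}$, then exploit distinctness of eigenvalues to reduce the matching ambiguity to scalars fixed by (A5). The only cosmetic difference is that the paper routes part~(ii) through the matrix representation $H$ of $\mathcal{C}_h$ on $U$ (obtaining $H=[V^{(2)}]^{-1}DV^{(1)}$ first and then composing with $B$), whereas you work directly with the pointwise identity $V^{(2)}\Psi(h(z))=DV^{(1)}\Psi(z)$; both are equivalent.
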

\begin{proof}
Let $\left\{ \lambda_{1}^{(2)},\dots,\lambda_{N}^{(2)}\right\} $
be the $N$ distinct eigenvalues of $K^{(2)}$ with corresponding
left eigenvectors $\left\{ v_{1}^{(2)},\dots,v_{N}^{(2)}\right\} $.
For each $j$, we know that 
\begin{equation}
g_{j}^{(2)}:=v_{j}^{(2),T}\Psi\in U
\end{equation}
is an eigenfunction of $\mathcal{K}_{F^{(2)}}$ with eigenvalue
$\lambda_{j}^{(2)}$. By conjugacy, we know that 
\begin{equation}
g_{j}^{(1)}:=g_{j}^{(2)}\circ h
\end{equation}
is an eigenfunction of $\mathcal{K}_{F^{(1)}}$ with the
same eigenvalue $\lambda_{j}^{(2)}$. From assumption (A2),
we know that $g_{j}^{(1)}\in U$ and hence by Lemma \ref{lem:lemma1},
there exists a unique $v_{j}^{(1)}$ such that 
\begin{equation}
g_{j}^{(1)}=v_{j}^{(1),T}\Psi\qquad v_{j}^{(1),T}K^{(1)}=\lambda_{j}^{(2)}v_{j}^{(1),T}.
\end{equation}
In particular, $v_{j}^{(1)}$ is a left eigenvector of
$K^{(1)}$ with eigenvalue $\lambda_{j}^{(2)}$.
Since the eigenvalues of $K^{(1)}$ are distinct, they
must also be $\left\{ \lambda_{1}^{(2)},\dots,\lambda_{N}^{(2)}\right\} .$
Hence, $K^{(1)}$and $K^{(2)}$ are similar
and this proves (i). 

Next, observe that 
\begin{equation}
\mathcal{K}_{h}(V^{(2)}\Psi)=(V^{(2)}\Psi)\circ h=\left(\begin{array}{c}
g_{1}^{(2),T}\circ h\\
\vdots\\
g_{N}^{(2),T}\circ h
\end{array}\right)=\left(\begin{array}{c}
g_{1}^{(1),T}\\
\vdots\\
g_{N}^{(1),T}
\end{array}\right)=DV^{(1)}\Psi,
\end{equation}
where $D$ is a diagonal matrix yet undetermined, because eigenvectors are only unique up to a multiplicative constant. Now, for each $j$,
we have
\begin{equation}
D_{jj}v_{j}^{(1),T}\Psi(z)=v_{j}^{(2),T}\Psi\circ h(z)
\end{equation}
for all $z\in\mathbb{R}^{d}$. Using (A5), we can determine $D$ as 
\begin{equation}
D_{jj}=\frac{\sum_{k=1}^N V_{jk}^{(2)}\psi_k(z^{(2)}_0)}{\sum_{k=1}^N V_{jk}^{(1)}\psi_k(z^{(1)}_0)}.
\end{equation}
Since $\mathcal{C}_{h}(V^{(2)}\Psi)=DV^{(1)}\Psi$,
using the definition (\ref{eq:proj_def}) of $H$, we get 
\begin{equation}
V^{(2)}H\Psi=DV^{(1)}\Psi\implies H=\left[V^{(2)}\right]^{-1}DV^{(1)}.
\end{equation}
Since the projection map $P\in U$ with $P=B\Psi$, we have 
\begin{equation}
h=P\circ h=\mathcal{K}_{h}P=BH\Psi=B\left[V^{(2)}\right]^{-1}DV^{(1)}\Psi.
\end{equation}
\end{proof}


\subsection{Example 5: Matching two 2D linear systems}
Consider two 2D linear dynamical systems 
\begin{equation}
F^{(1)}(x) = \left(\begin{array}{c}
x_{2}\\
x_{1}
\end{array}\right), \qquad
F^{(2)}(y) = \left(\begin{array}{c}
y_{1}\\
-y_{2}
\end{array}\right).
\end{equation}
These are matching systems with the transformation
\begin{equation}
y = h(x) = \left(\begin{array}{c}
x_{1}+x_{2}\\
x_{1}-x_{2}
\end{array}\right).
\end{equation}
We aim to discover this transformation from separate Koopman analysis of the two systems. We take as a dictionary
\begin{equation}
\Psi = (z_1, z_2)
\end{equation}
which satisfies conditions (A1)-(A4) with $B=I$. We denote $z_i$ both for $x_i$ and $y_i$, as the dictionary is the same for both systems. The projections of the two Koopman operators onto this subspace are easily obtained by following the actions of the generators $F^{(i)}\cdot\nabla$ on the functions $\psi_1(z)=z_1,\psi_2(z)=z_2$. We have
\begin{equation}
K^{(1)} = \left(\begin{array}{cc}
0 & 1\\
1 & 0
\end{array}\right)\qquad
K^{(2)} = \left(\begin{array}{cc}
1 & 0\\
0 & -1
\end{array}\right).
\end{equation}
Clearly, $K^{(1)}$ and $K^{(2)}$ are similar with eigenvalues $(-1,+1)$ and eigenvector matrices
\begin{equation}
V^{(1)} = \left(\begin{array}{cc}
-1 & 1\\
1 & 1
\end{array}\right)\qquad
V^{(2)} = \left(\begin{array}{cc}
0 & 1\\
1 & 0
\end{array}\right).
\end{equation}
Now assume that $x_0=(1,2)$ and $y_0=h(x_0)=(3,-1)$ are known, hence $D = \mathrm{diag}(1,-1)$. Using formula (ii) in Theorem \ref{thm:edmd_m}, we get
\begin{equation}
h(x) = B\left[V^{(2)}\right]^{-1}D V^{(1)} \Psi(x) = \left(\begin{array}{cc}
1 & 1\\
1 & -1
\end{array}\right)\Psi(x) = \left(\begin{array}{c}
x_1+x_2\\
x_1-x_2
\end{array}\right).
\end{equation}

\subsection{Example 6: Quadratic Example in 2D using a finite-dimensional projection}
In the previous example, we easily found a finite dimensional subspace $U=\mathrm{Span}(\Psi)$ on which various operations are closed. This is due to the linear structure of the problem. For general nonlinear systems or transformations,  such a finite-dimensional subspace may not (and in general, will not) exist. However, we show in the example that follows that formally, this approach also works in the countably infinite-dimensional setting. 
We revisit the problem of Example 3 and apply the EDMD-M method. 
\begin{equation}
F^{(1)}(x)=\left(\begin{array}{c}
a_{1}(x_{1}-x_{2}^{2})(1+4x_{1}x_{2}-4x_{2}^{3})-2a_{2}x_{2}((x_{1}-x_{2}^{2})^{2}-x_{2})\\
2a_{1}(x_{1}-x_{2}^{2})^{2}-a_{2}((x_{1}-x_{2}^{2})^{2}-x_{2})
\end{array}\right)
\end{equation}
and 
\begin{equation}
F^{(2)}(y)=\left(\begin{array}{c}
a_{1}y_{1}\\
a_{2}y_{2}
\end{array}\right).
\end{equation}
The goal is to find 
\begin{equation}
y=h(x_{1},x_{2})=\left(\begin{array}{c}
x_{1}-x_{2}^{2}\\
-x_{1}^{2}+x_{2}+2x_{1}x_{2}^{2}-x_{2}^{4}
\end{array}\right)
\end{equation}
using Theorem \ref{thm:edmd_m}. Let us consider the set of functions 
\begin{equation}
\left\{ \tilde{\psi}_{m,n}=z_{1}^{m}z_{2}^{n}:m,n\geq0,mn\neq0\right\} .
\end{equation}
The form of $\tilde{\psi}_{m,n}$ is reminiscent of the Carleman linearization\cite{tsiligiannis-1987,carleman-1932,banks-1992,kowalski-1991}, which has been useful lately for system identification by least-squares minimization, l1 minimization (sparse/compressed-sensing type), \cite{brunton2016discovering,napoletani2008reconstructing,wang2016data,yao2007modeling} or conjugacy defect minimization \cite{skufca2007relaxing}. In fact, the matrices $K$ approximating the Koopman operators here play the same role as the Carleman matrices. However, following the ideas from the original EDMD\cite{williams-2015,williams-2015b}, we could have chosen any suitable dictionary, not just multinomials. In fact, in the numerical example described in Sec.~\ref{sec:num example}, we will use a neural net to find the dictionary from data without prescribing it.

To improve indexing, we can use the bijective Cantor-Pairing function
\begin{equation}
c:\mathbb{N}\times\mathbb{N}\rightarrow\mathbb{N}
\end{equation}
with 
\begin{equation}
c(m,n)=\frac{1}{2}(m+n)(m+n+1)+n.
\end{equation}
Then, we define 
\begin{equation}
\psi_{j}=\tilde{\psi}_{c^{-1}(j)},\qquad\Psi=(\psi_{1},\psi_{2},\dots)\qquad U=\text{Span}\left\{ \Psi\right\} .
\end{equation}
For concreteness, the first 9 components are 
\begin{equation}
\Psi(z)=\left(\begin{array}{c}
z_{1}\\
z_{2}\\
z_{1}^{2}\\
z_{1}z_{2}\\
z_{2}^{2}\\
z_{1}^{3}\\
z_{1}^{2}z_{2}\\
z_{1}z_{2}^{2}\\
z_{2}^{3}\\
\vdots
\end{array}\right).
\end{equation}
With this choice, it is clear that (A1)-(A4) are satisfied.
For (A5),
we pick $z^{(1)}_{0}=(2,2)$ so that $z^{(2)}_{0}=h(z^{(1)}_{0})=(-2,-2)$ and assume that we have this pair of points. 

With the current indexing one can check that the projections of $\mathcal{K}_{F^{(i)}}$
to $U$ is very simple: they are given by
\begin{equation}
K^{(1)}=\left(\begin{array}{ccccccccccc}
a_{1} & 0 & 0 & 0 & -a_{1}+2a_{2} & 0 & 4a_{1}-2a_{2} & 0 & 0\\
0 & a_{2} & 2a_{1}-a_{2} & 0 & 0 & 0 & 0 & -4a_{1}+2a_{2} & 0\\
0 & 0 & 2a_{1} & 0 & 0 & 0 & 0 & -2a_{1}+4a_{2} & 0\\
0 & 0 & 0 & a_{1}+a_{2} & 0 & 2a_{1}-a_{2} & 0 & 0 & -a_{1}+2a_{2}\\
0 & 0 & 0 & 0 & 2a_{2} & 0 & 4a_{1}-2a_{2} & 0 & 0\\
0 & 0 & 0 & 0 & 0 & 3a_{1} & 0 & 0 & 0 & \cdots & \cdots\\
0 & 0 & 0 & 0 & 0 & 0 & 2a_{1}+a_{2} & 0 & 0\\
0 & 0 & 0 & 0 & 0 & 0 & 0 & a_{1}+2a_{2} & 0\\
0 & 0 & 0 & 0 & 0 & 0 & 0 & 0 & 3a_{2}\\
&  &  &  &  & \vdots &  &  &  & \ddots\\
&  &  &  &  & \vdots &  &  &  &  & \ddots
\end{array}\right)
\end{equation}
and
\begin{equation}
K^{(2)}=\left(\begin{array}{ccccccccccc}
a_{1} & 0 & 0 & 0 & 0 & 0 & 0 & 0 & 0\\
0 & a_{2} & 0 & 0 & 0 & 0 & 0 & 0 & 0\\
0 & 0 & 2a_{1} & 0 & 0 & 0 & 0 & 0 & 0\\
0 & 0 & 0 & a_{1}+a_{2} & 0 & 0 & 0 & 0 & 0\\
0 & 0 & 0 & 0 & 2a_{2} & 0 & 0 & 0 & 0\\
0 & 0 & 0 & 0 & 0 & 3a_{1} & 0 & 0 & 0 & \cdots & \cdots\\
0 & 0 & 0 & 0 & 0 & 0 & 2a_{1}+a_{2} & 0 & 0\\
0 & 0 & 0 & 0 & 0 & 0 & 0 & a_{1}+2a_{2} & 0\\
0 & 0 & 0 & 0 & 0 & 0 & 0 & 0 & 3a_{2}\\
&  &  &  &  & \vdots &  &  &  & \ddots\\
&  &  &  &  & \vdots &  &  &  &  & \ddots
\end{array}\right)
\end{equation}
Again, these matrices are obtained by following the action of the generators $F^{(i)}\cdot\nabla$
on the functions in $\Psi$. Notice that both $K^{(1)}$
and $K^{(2)}$ have distinct (and identical) eigenvalues
\begin{equation}
\lambda_{j}=c^{-1}(j)^{T}\left(\begin{array}{c}
a_{1}\\
a_{2}
\end{array}\right).
\end{equation}
We can calculate their eigenvector matrices (which we will not write down for brevity). The projection operators are also included, with
$P=B\Psi$ and
\begin{equation}
B=\left(\begin{array}{ccccc}
1 & 0 & 0 & 0 & \cdots\\
0 & 1 & 0 & 0 & \cdots
\end{array}\right).
\end{equation}
We shall take the first eigenvectors' first 14 dimensions (enough to span $h$). With this choice,
we get 
\begin{equation}
D=\mathrm{Diag}\left\{ -1,1,-\frac{5}{3},1,-1,\frac{17}{4},-\frac{5}{3},1,3,1,-\frac{5}{2},1,-5,1\right\}.
\end{equation}
Using the formula
\begin{equation}
h=B\left[V^{(2)}\right]^{-1}DV^{(1)}\Psi
\end{equation}
we have indeed
\begin{equation}
h(x)=\left(\begin{array}{c}
x_{1}-x_{2}^{2}\\
-x_{1}^{2}+x_{2}+2x_{1}x_{2}^{2}-x_{2}^{4}
\end{array}\right).
\end{equation}
Notice that this result was already arrived at by other means in Eq.~(\ref{spf}).

\subsection{Numerical Implementations of EDMD-M}


In the above examples, we treated EDMD-M as a method to find the matching transformation symbolically. 
We now briefly discuss a possible numerical approach. 
The key is to find a dictionary set $\Psi$ that has the desired properties (A1)-(A4). 
Note that (A4) is easy to satisfy by simply including the projection maps into the dictionary set. 
The rest of the dictionary elements must be selected to be broad enough so that (A1) and (A2)
can be satisfied simultaneously, but not so large so that (A3) is not satisfied. 
This can be achieved by adapting the dictionary to both dynamical systems
(1) and (2) \textit{simultaneously} and finding a common invariant subspace. One can perform this 
numerically via the recently developed dictionary learning based EDMD method\cite{li-2017},
and we demonstrate it through a numerical example in the next section. 
In essence, we rephrase the question into finding the best approximation of $h$ in
some adaptively chosen subspace $U$, which satisfies (A1),(A2), and (A4) \textit{approximately}. 
If $U$ is broad enough, this may well form a good approximation, at least on bounded domains. 
To make this statement rigorous, one will need to relax Theorem \ref{thm:edmd_m} into an ``approximate" version.
Lastly, we have assumed throughout that we have access to at least one ``matching data point'' satisfying some
non-degenerate conditions (A5). This stems from the fact that eigenfunctions (and projected eigenvectors)
are only unique up to a constant---which must be fixed in order to perform matching. We will return to this ``matching data point issue'' below.

\subsection{Numerical Example for EDMD-M}\label{sec:num example}
We now apply EDMD-M in a numerical example, to test and demonstrate that the method gracefully degrades when loosening the assumptions.
Consider the Van der Pol system with inverted stability, such that the limit cycle is repelling and the steady state is attracting:
\begin{eqnarray}
\label{eq:vdp1}\frac{d}{dt}x_1&=&-x_2,\\
\label{eq:vdp2}\frac{d}{dt}x_2&=&-\mu (1-x_1^2)x_2-x_1.
\end{eqnarray}
where we choose $\mu=1$.  We focus on a small domain around the steady state at $(0,0)$, away from the limit cycle. Then, a (possibly nonlinear) homeomorphism $h=(h_1,h_2)$ transforming states $(x_1(t),x_2(t))$ into
$(y_1(t),y_2(t))=(h_1(x_1(t)),h_2(x_2(t)))$ will yield trajectories of a topologically orbit equivalent system. For this example, we choose
\begin{eqnarray}
h_i(x_i) &=& \ln\left(a+\exp\left(b x_i\right)\right),\ a>0,b \in \mathbb{R}/\{0\},\\
h^{-1}_i(x_0) &=& \ln\left(\exp( x_i ) - a\right) /b.
\end{eqnarray}
We will again pretend to not know these transformations, and then use EDMD-M to (re)discover them. We only kept one matching data point to satisfy (A5).
Figure~\ref{fig:vdp transformed} shows an example trajectory at $a=1.2, b=-1.5$ of the original system defined through Eqs.~(\ref{eq:vdp1})--(\ref{eq:vdp2}) and its transformation through $h(x_1,x_2)=(h_1(x_1),h_2(x_2))$.
\begin{figure}[h]
\centering
\includegraphics[width=0.5\textwidth]{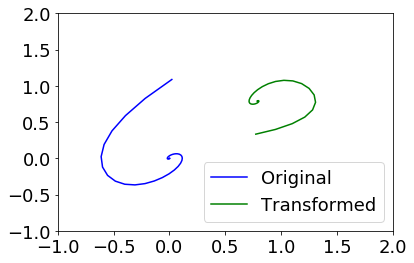}
\caption{\label{fig:vdp transformed}Original trajectory of the van der Pol system (blue) and the transformed trajectory (green), with parameters $a=1.2,b=-1.5$.}
\end{figure}
We now apply EDMD-M as outlined in the previous section to recover the transformation $h$ as an approximation
$\hat{h}$ from data of the van der Pol system and the system we want to match it to. 
The data is sampled independently, in an area around the steady state of the respective systems.
In the usual EDMD with dictionary learning \cite{li-2017}, we parameterize a dictionary set $\Psi$ by 
a multi-layer neural network, whose trainable weights are represented by $\theta$. That is,
$\Psi(\cdot;\theta)$ is a $\mathbb{R}^{N}$-valued function.
One can then proceed as in the usual EDMD algorithm and minimize the loss function
$J(K,\theta)=\sum_{x,x'} \| \Psi(x',\theta) - K \Psi(x,\theta) \|^2$ where
$x,x'$ are pairs of data where $x=x(t)$ and $x'=x(t+\Delta t)$ for a trajectory $x(t)$.
The main difference is that EDMD-M requires us to use the same dictionary $\Psi$ for both systems. 
Therefore, we define a combined loss function for both systems in the form
\begin{eqnarray}
J(K^{(1)},K^{(2)}&&,\theta)=\sum_{x',x,y',y} 
\|\Psi(x', \theta) - K^{(1)}\Psi(x, \theta)\|^2 +
\|\Psi(y', \theta) - K^{(2)}\Psi(y, \theta)\|^2,
\end{eqnarray}
where the sum is over all data points $x,x',y,y'\in\mathbb{R}^2$ of the form
$x'=x(t+\Delta t), x=x(t), y'=y(t+\Delta t), y=y(t)$ for some trajectories $x(t),y(t)$ for system (1)
and (2) respectively.
For the current application, we use a three-layer fully-connected neural network with tanh activation functions
to represent $\Psi$.
The learning phase of the network iterates between computing the matrices $K^{(1)},K^{(2)}$ by evaluating the current dictionary $\Psi$, and then updating the dictionary (the neural network) with fixed $K^{(1)},K^{(2)}$:
\begin{enumerate}
\item Compute $K^{(1)},K^{(2)}$ by assigning $K^{(i)}\leftarrow (G^{(i)}(\theta)+\lambda I)^{-1}A^{(i)}(\theta)$.
\item Update $\Psi(x;\theta)$ by changing $\theta\leftarrow \theta - \delta \nabla_\theta J(K^{(1)},K^{(2)},\theta)$.
\end{enumerate}
The matrices $K^{(1)},K^{(2)}$ obtained in step 1 will in general \textbf{not be similar}, i.e. will not have the same spectrum, even though they have the same singular values; we do not discuss this problem in detail, as it does not appear to matter for this example.
To direct the iterations to converge on matrices $K^{(1)},K^{(2)}$ which at least have the same singular values $\sigma_1,\dots,\sigma_n$, we introduce an intermediate optimization step, splitting step 1 into
\begin{enumerate}
\item[1. (a)] Compute $K^{(1)},K^{(2)}$ from $\theta$ as before;
\item[1. (b)] Use $K^{(1)}$, $K^{(2)}$ and a random, square matrix $P$ as initial conditions for
\begin{equation}
(K^{(1')},K^{(2')},P)=\min_{(A,B,P')} \lbrace\beta \|P' A- B P'\|^2+J(A,B,\theta)\rbrace;
\end{equation}
Here, $\beta$ is some regularization parameter, which we take to be 100. 
\item[1. (c)] Assign $K^{(1)}\leftarrow K^{(1')}$, $K^{(2)}\leftarrow K^{(2')}$.
\end{enumerate}
We solve the optimization problem in step 1.(b) with sequential least squares programming (SLSQP), with the constraint that $\Vert P\Vert_F^2\geq 1$ to remove the trivial, zero solution.
In fact, we should also require $P$ to be invertible in order to ensure similarity of $K^{(1')},K^{(2')}$.
However, empirically we found that the matrix $P$ resulting from the quadratic program above
is invertible and thus further regularization was not performed. 
 %
Figure~\ref{fig:vdp matching} shows a matched trajectory of the transformed system, computed through the approximated transformation $\hat{h}$ defined in theorem~\ref{thm:edmd-m}~(ii). We use a random point $(x_1,x_2)_0$ paired with $h((x_1,x_2)_0)$ as the required matching pair. 
Figure~\ref{fig:vdp matching relative errors} shows relative errors $\frac{|{h}_i(x(t))-\hat{h_i}(x(t))|}{|{h_i}(x(t))|}$  for both coordinates, averaged over 100 trajectories with random initial conditions. The relative error is about five percent for the initial conditions and then rapidly decays.
\begin{figure}[h!]
\centering
\includegraphics[width=0.9\textwidth]{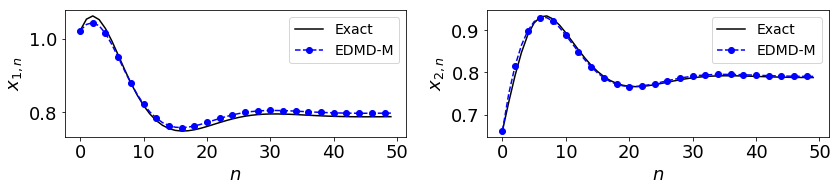}
\caption{\label{fig:vdp matching}An illustrative trajectory of the transformed system with $a=1.2,b=-1.5$, generated with the transformations $h$ and $\hat{h}$. The left plot shows the exact values from $h_1(x_1(t))$ overlayed with the values of $\hat{h}_1(x_1(t))$ from EDMD-M; the right plot shows $h_2(x_2(t))$ and $\hat{h}_2(x_2(t))$.}
\end{figure}
\begin{figure}[h!]
\centering
\includegraphics[width=0.45\textwidth]{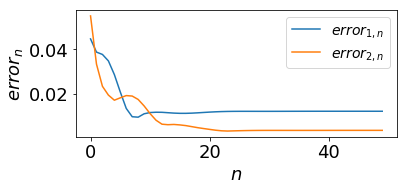}
\caption{\label{fig:vdp matching relative errors}Relative errors for both coordinates $x_1$ and $x_2$, averaged over 100 trajectories with randomly sampled initial conditions. The error is about five percent for the initial conditions and then rapidly decays.}
\end{figure}
\begin{figure}[h!]
\centering
\includegraphics[width=0.5\textwidth]{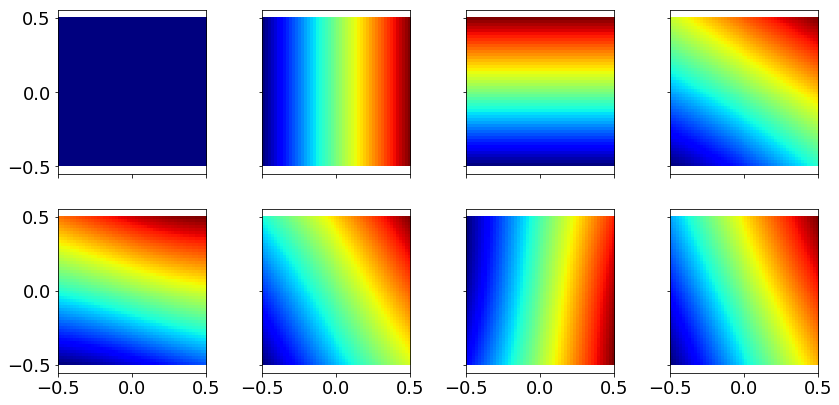}
\caption{\label{fig:vdp_matching_dictionary}The dictionary used in EDMD-M. The first three elements are the constant and two identity functions, the remaining five are obtained from the neural network.}
\end{figure}

\section{Conclusion}


Classifying dynamical systems up to an equivalence relationship is perhaps the foundational concept of the field.  In this paper we have demonstrated a way of inferring such transformations between dyamical systems, when they are so related.  We have described how the recently repopularized (but classical in egodic theory) spectral theory of Koopman operators offers a way to associate dynamical systems when an exact transformation between them exists.  There are several major obstacles to this program that we discussed in this preliminary work.  We have shown that the PDE associated with the infinitesimal generator of a Koopman (Composition) operator allows the explicit construction of these eigenfunctions in many cases.  Specifically, it is known that spectral matching is not sufficient for system matching in the general scenario: two systems can be related by a spectral isomorphism, but not have a spatial isomorphism\cite{redei2012history}. 
Even if there exists a spatial isomorphism, finding it through eigenfunctions is a challenge due to the geometric multiplicity of eigenvalues in higher dimensions. Despite this, we have shown that there are special cases where we can still use spectral matching to compute orbit equivalences through the matched eigenfunctions.  Beyond these special examples, we have then developed a computational optimization framework leading to what we call EDMD-M (Extended Dynamic Mode Decomposition Matching) that attains useful solutions.  Furthermore, this leads to a notion of approximate matching, that may be a useful alternative to the ``conjugacy defect'' arrived at via  fixed point iteration methods \cite{skufca2008concept, bollt2010comparing}.  

We already know that the flow box theorem guarantees that any vector field can be locally transformed to be the trivial flow $\dot{z}=(1,0\dots,0)^T$ away from singularities.
This implies that the transformation we try to construct through our methods exists locally, since matching each of our two candidate systems to the trivial flow also allows us to locally match them to each other. 
The major difficulty then lies in determining how far one can extend the domain and range of the resulting transformation, in other words, ``how big a patch can we match''. 
Clearly, this is limited by the same singularities that cause the global breakdown in the proof of  the flow box theorem.  
Promising ideas addressing the regularization of singularities in various branches of mathematics (from topological compactification~\cite{ozyesil-2016} to post-focusing PDE solutions~\cite{fibich-2011}) 
should provide both inspiration and possible road maps towards tackling these limitations.

\section{Acknowledgments}
We thank Igor Mezi\'{c} for his very helpful comments in the preparation of the document. The work of I.G.K. was partially supported by DARPA-MoDyL (HR0011-16-C-0116) and by the U.S. National Science Foundation (ECCS-1462241). I.G.K. and F.D. are grateful for the hospitality and support of the IAS-TUM. F.D. is also grateful for the support from the TopMath Graduate Center of TUM Graduate School at the Technical University of Munich, Germany, and from the TopMath Program at the Elite Network of Bavaria. E.M.B. thanks the Army Research Office (N68164-EG) and the Office of Naval Research (N00014-15-1-2093). Q.L. is grateful for the support of the Agency for Science, Technology and Research, Singapore.

\newpage
\appendix

\section{The PDE for Koopman eigenfunctions}\label{pdeforKoopmaneig}

The defining relation of a KEIGs pair, ${\cal K}_t[g](z)=e^{\lambda t} g(z)$, describes the rate of growth of an observation. Fig.~\ref{fig1} illustrates this behavior.  Inspection of the definition of evolution of $g$ along orbits suggests that we are demanding an equation for $g$ that satisfies,
\begin{equation}\label{n1}
\dot{g}=\lambda g.
\end{equation}
Recall that $g$ is a function, $g:M\rightarrow {\mathbb C}$. The notation $\dot{(\mbox{ })}=d(\mbox{ })/dt$ of the time derivative of $g$ is an abbreviation for the statement that the function values change over the trajectory $z(t)$ in the way given through Eq.~\ref{n1}. By the chain rule,
\begin{equation}\label{n2}
\dot{g}=d(g(z))/dt=\sum_i \frac{\partial g(z)}{\partial z_i} \dot{z}_i=\nabla g \cdot \dot{z}=\nabla g \cdot F(z).
\end{equation}
Hence, combining Eqs.~(\ref{n1}) and (\ref{n2}), we get (repeating Theorem \ref{thm2}):

\begin{thm}\label{thm2b}
 Given a domain ${\mathbb X}\subseteq M\subseteq{\mathbb R^d}$, $z\in{\mathbb X}$, and $\dot{z}=F(z)$ with $F:{\mathbb X}\rightarrow {\mathbb R}^d$, then the corresponding Koopman operator has eigenfunctions $g(z)$ that are solutions of the linear PDE,
\begin{equation}\label{qpdeb}
\nabla g \cdot F(z)=\lambda g(z).
\end{equation}
if $\mathbb{X}$ is compact and $g(z):{\mathbb X}\rightarrow {\mathbb C}$ is in $C^1({\mathbb X})$, or alternatively, if $g(z)$ is $C^2({\mathbb X})$. 
\end{thm}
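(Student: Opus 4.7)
The plan is to identify Koopman eigenfunctions with eigenfunctions of the infinitesimal generator $A_{\mathcal{K}} = F\cdot\nabla$ of the Koopman semigroup, in which case the PDE in Eq.~(\ref{qpdeb}) is literally the eigenvalue equation $A_{\mathcal{K}} g = \lambda g$. The statement then has two directions: (i) every eigenfunction of $\mathcal{K}_t$ (with the claimed regularity) satisfies the PDE, and (ii) every $C^1$ (or $C^2$) solution of the PDE is a Koopman eigenfunction.

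For the forward direction I would start from the defining relation $\mathcal{K}_t[g](z) = g(S_t(z)) = e^{\lambda t} g(z)$ and differentiate both sides in $t$ at $t=0$. The right-hand side yields $\lambda g(z)$ trivially. For the left-hand side, the chain rule together with $\dot{S}_t(z)\big|_{t=0} = F(z)$ gives
\begin{equation}
\frac{d}{dt}\bigg|_{t=0} g(S_t(z)) \;=\; \nabla g(z)\cdot F(z),
\end{equation}
which is legitimate because $g \in C^1$ and $t \mapsto S_t(z)$ is $C^1$ with derivative $F$. Compactness of $\mathbb{X}$ plays two roles: it keeps trajectories in $\mathbb{X}$ so the composition $g \circ S_t$ is well defined, and it guarantees that $\nabla g$ and $F$ are uniformly bounded, so the limit defining the derivative exists pointwise and, if one wants, uniformly in $z$.

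For the converse, I would fix $z \in \mathbb{X}$ and define the scalar function $\phi_z(t) := g(S_t(z))$. By the chain rule and the PDE,
\begin{equation}
\dot{\phi}_z(t) \;=\; \nabla g(S_t(z))\cdot F(S_t(z)) \;=\; \lambda\, g(S_t(z)) \;=\; \lambda\, \phi_z(t),
\end{equation}
a scalar linear ODE with initial value $\phi_z(0) = g(z)$. Integration gives $\phi_z(t) = e^{\lambda t} g(z)$, which is $\mathcal{K}_t[g](z) = e^{\lambda t} g(z)$, establishing that $g$ is a KEIG with eigenvalue $\lambda$.

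The main obstacle is the regularity hypothesis and its role. With $g \in C^1(\mathbb{X})$ and $\mathbb{X}$ compact, the classical chain rule applies and both directions go through without subtlety. The alternative $C^2$ clause is presumably there to cover a weak/distributional reading of the generator on $L^2(\mathbb{X})$: one writes the generator in a distributional sense $\langle A_{\mathcal{K}} g, \varphi\rangle = -\langle g, \mathrm{div}(F\varphi)\rangle$ against test functions $\varphi$, and then elliptic/transport regularity together with $C^2$ data lets one conclude that a weak solution is a strong solution, so that the pointwise argument above is available. Either way, the real content is the single-line passage from the semigroup eigenvalue equation to the ODE $\dot\phi = \lambda\phi$ along characteristics; the regularity assumptions are precisely those needed to legitimize that passage.
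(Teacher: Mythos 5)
Your core argument coincides with the paper's: in Appendix A the paper derives the PDE by writing the eigenrelation along a trajectory as $\dot g = \lambda g$, applying the chain rule (equivalently, computing the infinitesimal generator $A_{\mathcal{K}}g = \nabla g \cdot F$), and equating the two. Your forward step (differentiate $g(S_t(z)) = e^{\lambda t}g(z)$ at $t=0$) is the same computation. You additionally supply the converse -- defining $\phi_z(t) = g(S_t(z))$, observing $\dot\phi_z = \lambda\phi_z$, and integrating -- which the paper leaves implicit in the proof of Theorem~\ref{thm2} (although it uses this direction freely elsewhere, e.g.\ in the explicit eigenfunction examples of Appendix~B). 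That is a useful completion rather than a divergence.

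Where you depart from the paper is in the reading of the regularity hypotheses. The paper's proof invokes the mean value theorem to control the difference quotient defining $A_{\mathcal{K}}g$, and presents $g\in C^1(\mathbb{X})$ with $\mathbb{X}$ compact versus $g\in C^2(\mathbb{X})$ (unrestricted $\mathbb{X}$) as two alternative sufficient conditions for that step; the extra derivative replaces compactness as a way to control the remainder when the domain is not compact. Your guess that the $C^2$ clause is there to support a weak/distributional formulation and then bootstrap to a strong solution is not what the paper intends -- weak solutions are treated separately in a standalone Remark immediately after the theorem, and are not tied to the $C^2$ alternative. This is a misreading of the hypothesis, not a gap in the mathematics: your chain-rule argument is valid under either condition, but you should not attribute the $C^2$ clause to elliptic or transport regularity of weak solutions.
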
 
\begin{rem}
When the domain ${\mathbb X}$ is chosen too large, then solutions to (\ref{qpde}) may only exist in a weak sense,
\begin{equation}
-\int g\, \mathrm{div}(Fv) dx= \lambda \int g v dx
\end{equation} for test functions $v$ in an appropriately chosen function space. In this paper, we focus on strong solutions, restricting ${\mathbb X}$ where necessary.
\end{rem}

Equation~(\ref{qpde}),~(\ref{qpdeb}) follows more rigorously through a discussion of the infinitesimal generator of the Koopman operator.
This infinitesimal generator, which we denote $A_\mathcal{K}$, acts on observables $g$ such that
\begin{equation}\label{infeq}
A_\mathcal{K}g(x)=\mathop{\lim}_{t\rightarrow
0}\frac{g(S_t(x_0))-g(x_0)}{t}=\mathop{\lim}_{t\rightarrow
0}\frac{g(x(t))-g(x_0)}{t},
\end{equation}
which follows from the definition of the operator (Eq.~\ref{eq:koopman operator}).
If $g$ is continuously differentiable on a compact set $\mathbb{X}$, $g\in C^1({\mathbb X})$,  
we can apply the mean value theorem to obtain
\begin{eqnarray}
A_\mathcal{K}g(x)&=&\mathop{\sum}_{i=1}^{d}\frac{\partial g}{\partial
x_i}F_i(x)\\
\label{infkoopman}&=&\nabla g \cdot F(x).
\end{eqnarray}
Further details can be found in, \cite{lasota2013chaos, bollt2013applied}. Alternatively, for more general $\mathbb{X}$,  this also holds for $g\in C^2(\mathbb{X})$.
Now we complete the proof of Theorem \ref{thm2} (\ref{thm2b}), which is immediate.  Eq.~(\ref{n1}) together with Eq.~(\ref{infkoopman}) give the KEIGS linear PDE,
\begin{equation}\label{thm1b}
\nabla g \cdot F(x)=\lambda g(x).
\end{equation}
%
\medskip In Appendix~\ref{solns} we discuss several example solutions of this problem. Generally, it can be solved using the method of characteristics, when function values on an appropriately chosen initial curve are given.

%
%

\section{On Solutions of the Linear Koopman PDE}\label{solns}

The linear Koopman PDE, Eq.~(\ref{qpde}) of Theorem \ref{thm2}  may be solved by either the method of integrating factors in the case of one spatial domain (ODE), or by the method characteristics for higher dimensional domains (PDE), both of which we review here.  This will allow us to discuss the nature of the spectrum and eigenfunctions.  Most relevant to our discussion is the fact that the KEIGs may have nontrivial geometric multiplicity if $d>1$.

\subsection{One-Dimensional Spatial Domain, the ODE Case}

In the single-variable case,  for a given $F(x)$, the ODE from Theorem \ref{thm2} Eq.~(\ref{qpde}) becomes,
\begin{equation}
\frac{dg}{dx}F(x)=\lambda g, g(x_0)=g_0.
\end{equation}
 The standard method of multiplying factors is to define,
\begin{equation}
M(x)=e^{\int_{x_0}^x -\frac{\lambda}{F(s)}ds},
\end{equation}
from which a perfect derivative is developed,
\begin{equation}
\frac{d}{dx}(g(x)M(x))=0,
\end{equation}
and,
\begin{equation}
g(x)=e^{\int_{x_0}^x \frac{\lambda}{F(s)}ds}g_0,
\end{equation}
follows.  Notice that only the scalar constant $g_0$ differs, and we consider eigenfunctions different up to a scalar constant as an equivalence class. 

\subsection{The PDE Case, $d>1$}\label{pdecase}  

In the multivariate case, the PDE in Theorem \ref{thm2}, \ref{thm2b}, Eq.~(\ref{qpde}, \ref{qpdeb}),
has solutions that can be understood in terms of propogating initial data.  The problem may be solved by the method of characteristics, and it is straight forward (in our case) to derive that characteristic curves along which initial data propogates are the solutions of the underlying ODE.  Initial data may be defined on any co-dimension-one surface $\Sigma$ that is transverse to the flow.  Considering that on $\Sigma$, one may choose arbitrary $C^1(\Sigma)$ functions $g_0(z)$. For this reason, we see that there can be an uncountable number of solutions of the PDE, even for a single eigenvalue $\lambda$, and in a given domain.  This is the source of what we have called the nontrivial geometric multiplicity of Koopman eigenvalues in Sec.~\ref{defectsection}.  We will illustrate this phenomenon with the following two-dimensional examples.


\subsubsection{Two-Dimensional Spatial Domain, Example 1}

Consider the 2D nonlinear system
\begin{eqnarray*}
\dot{x} & = & y\qquad x(0)=x_{0}\\
\dot{y} & = & \frac{y^{2}}{x}\qquad y(0)=y_{0}
\end{eqnarray*}
for $(x,y)\in\mathbb{R}^{2}\backslash\left\{ 0\right\} $.
The solution is 
\begin{equation}
x(t)=\frac{x_{0}^{2}}{x_{0}-ty_{0}}\qquad y(t)=\frac{x_{0}^{2}y_{0}}{(x_{0}-ty_{0})^{2}}
\end{equation}
Now, consider the linear PDE defining the KEIGs
\begin{equation}
\nabla g(x,y)\cdot F(x,y)=\lambda g(x,y)
\end{equation}
where 
\begin{equation}
F(x,y)=(y,\frac{y^{2}}{x}).
\end{equation}
Let us set 
\begin{equation}
g(x,y)=e^{G(x,y)}.
\end{equation}
Then, we have
\begin{equation}
\nabla G(x,y)\cdot F(x,y)=\lambda.
\end{equation}
This is a linear PDE solvable by the method of characteristics. To obtain
unique solutions, we have to prescribe the function value of $G$
on some curve $\Sigma\subset\mathbb{R}^{2}$. Let us pick the curve $\Sigma:=\{(x,y): x=y\}$ (of
course, other curves can be chosen) and suppose $G(x,y)=G_{0}(x)$
on $\Sigma$ for some function $G_{0}:\mathbb{R}\rightarrow\mathbb{R}$.
Then, on the characteristic 
\begin{equation}
\frac{d}{ds}(x(s),y(s))=F(x(s),y(s))\label{eq:chars}
\end{equation}
we have 
\begin{equation}
\frac{d}{ds}G(s)\equiv\frac{d}{ds}G(x(s),y(s))=\lambda.\label{eq:evo_on_chars}
\end{equation}
We can solve (\ref{eq:chars}) and (\ref{eq:evo_on_chars}) to get
\begin{equation}
x(s)=\frac{x_{0}^{2}}{x_{0}-sy_{0}}\qquad y(s)=\frac{x_{0}^{2}y_{0}}{(x_{0}-sy_{0})^{2}}\qquad G(s)=\lambda s.
\end{equation}
It turns out in this case that the characteristics do not cross and
hence we can invert the expressions for $(x(s),y(s))$
to get 
\begin{equation}
x_{0}(x,y,s)=\frac{x^{2}}{x+sy}\qquad y_{0}(x,y,s)=\frac{x^{2}y}{(x+sy)^{2}}.
\end{equation}
Backtracking, we can find the time $s$ at which the characteristics
cross $\Sigma$. By setting $x_{0}=y_{0}$ to solve for $s=s(x,y)$,
we get
\begin{equation}
s(x,y)=\frac{y-x}{y}
\end{equation}
and so 
\begin{equation}
x_{0}(x,y,s(x,y))=y_{0}(x,y,s(x,y))=\frac{x^{2}}{y}.
\end{equation}
Therefore, we have the solution 
\begin{eqnarray*}
G(x,y) & = & G_{0}(x_{0}(x,y,s(x,y)))+\lambda s(x,y)\\
 & = & G_{0}(\frac{x^{2}}{y})+\lambda(\frac{y-x}{y}).
\end{eqnarray*}
and hence
\begin{equation}
g(x,y)=e^{G_{0}(\frac{x^{2}}{y})+\lambda(\frac{y-x}{y})}
\end{equation}
with $G_{0}:\mathbb{R}\rightarrow\mathbb{R}$ an arbitrary $C^{1}(\Sigma)$
function. In other words, we have a family of eigenfunctions indexed
by a function $G_{0}$, all with the same eigenvalue $\lambda$.

\subsubsection{Two-Dimensional Spatial Domain, Example 2}
\label{2dlinear}

As a second example, we take 
\begin{equation}
F(x,y)=(ax,by).
\end{equation}
As before, we can take $e^{G(x,y)}=g(x,y)$
and solve 
\begin{equation}
\nabla G\cdot F=\lambda.
\end{equation}
The characteristic equation solutions are 
\begin{equation}
x(s)=x_{0}e^{as}\qquad y(s)=y_{0}e^{bs}
\end{equation}
which inverts to 
\begin{equation}
x_{0}(x,y,s)=xe^{-as}\qquad y_{0}(x,y,s)=ye^{-bs}.
\end{equation}
It remains to define a curve on which we assign the function values.
We consider 
\begin{equation}
\Sigma=\left\{ x,y:x^{2}+\vert y\vert^{\frac{2a}{b}}=1\right\}.
\end{equation}
Then all characteristics emanating from $\Sigma$ (for $s\in \mathbb{R}$)  cover all
of $\mathbb{R}^{2}\backslash\left\{ 0\right\} $. In particular, we
find the crossing time $s(x,y)$ by solving $\left[x_{0}(x,y,s)\right]^{2}+\left|y_{0}(x,y,s)\right|^{\frac{2a}{b}}=1$,
which gives 
\begin{equation}
s(x,y)=\frac{1}{2a}\log(x^{2}+\vert y\vert^{\frac{2a}{b}})
\end{equation}
and 
\begin{equation}
x_{0}(x,y,s(x,y))=\frac{x}{\sqrt{\left|y\right|^{\frac{2a}{b}}+x^{2}}}\qquad y_{0}(x,y,s(x,y))=\frac{y}{(x^{2}+\left|y\right|^{\frac{2a}{b}})^{\frac{b}{2a}}}.
\end{equation}
Hence, the general solution is 
\begin{eqnarray*}
G(x,y) & = & G_{0}(x_{0}(x,y,s(x,y)),x_{y}(x,y,s(x,y)))+\lambda s(x,y)\\
 & = & \frac{\lambda}{2a}\log(x^{2}+\vert y\vert^{\frac{2a}{b}})+G_{0}(\frac{x}{\sqrt{x^{2}+\left|y\right|^{\frac{2a}{b}}}},\frac{y}{(x^{2}+\left|y\right|^{\frac{2a}{b}})^{\frac{b}{2a}}})
\end{eqnarray*}
which gives
\begin{equation}
g(x,y)=(x^{2}+\vert y\vert^{\frac{2a}{b}})^{\frac{\lambda}{2a}}g_{0}(\frac{x}{\sqrt{x^{2}+\left|y\right|^{\frac{2a}{b}}}},\frac{y}{(x^{2}+\left|y\right|^{\frac{2a}{b}})^{\frac{b}{2a}}})\label{eq:2D_2_soln}
\end{equation}
where $g_{0}:\Sigma\rightarrow\mathbb{R}$ is an arbitrary $C^{1}(\Sigma)$ function. 


\subsection{Numerical construction of the initial set}\label{sec:construction by numcont}

The solution to the PDE~\ref{pdecase} with the method of characteristics requires an appropriately chosen co-dimension-one initial set $\Sigma\subset\mathbb{X}$ in the state space, which is everywhere transverse to the flow.
The initial set can be a level set of an eigenfunction if the associated eigenvalue has a nonzero real part.  The requirement of starting with an eigenfunction to construct other eigenfunctions (you got to have money to make money!) through the PDE might seem restrictive, but Laplace Averages provide a means to find such a level set by numerical continuation, circumventing the effort to construct the initial eigenfunction over all of the state space.
Laplace Averages are eigenfunctions $f^*$ of the Koopman operator\cite{mauroy-2013}, and are defined when given an observation function $f:\mathbb{X}\to\mathbb{R}$, and the eigenvalue $\lambda$ associated with the eigenfunction. Then,
\begin{equation}
f^*(x):=\int_0^\infty (f\circ S_t)(x) \exp(-\lambda t) dt,
\end{equation}
if the integral exists.
To construct a level set of this eigenfunction, we proceed through numerical continuation, given a point $x_0$ in the state space $\mathbb{X}$.
First, we compute the absolute value of the Laplace average at the starting point, $|f^*(x_0)|=c$. This value fixes the level set $$L_c=\{x\in \mathbb{X}:|f^*(x)|=c\},$$ the connected part of which can now be constructed through numerical continuation.
For unstable fixed points, the Laplace average has to be computed backwards in time\cite{mauroy-2013}.
Fig.~\ref{fig:levelset_vdp} shows a level set, constructed through numerical continuation, close to the fixed point of the van der Pol system (see Eq.~\ref{eq:vdp1}) at $\mu=0.5$.
\begin{figure}[h!]
\centering
\includegraphics[width=0.6\textwidth]{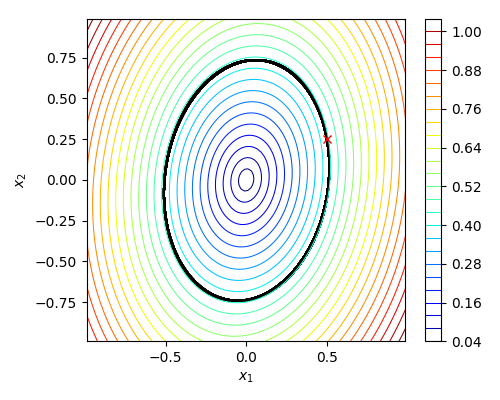}
\caption{\label{fig:levelset_vdp}Black line: Level set of an eigenfunction of the van der Pol system (Eq.~\ref{eq:vdp1}), computed through Laplace Averaging and numerical continuation, starting at the point $x_0$ (red cross). The colored contour lines show the level sets of the eigenfunction computed on the whole domain shown, to verify that the numerical continuation successfully found one of them.}
\end{figure}

\section{Example of Problems in Matching due to Nontrivial Geometric Multiplicity}
\label{defectexample}
The nontrivial geometric multiplicity of the Koopman eigenvalues poses a problem in the context of matching systems. Let us consider the example in Appendix \ref{2dlinear}. Suppose that we have two systems both with dynamics $F^{(1)}=F^{(2)}=[ax,by]^T$. We have, from before, the general solution of the eigenfunction
\begin{equation}
g(x,y)=(x^{2}+\vert y\vert^{\frac{2a}{b}})^{\frac{\lambda}{2a}}g_{0}(\frac{x}{\sqrt{x^{2}+\left|y\right|^{\frac{2a}{b}}}},\frac{y}{(x^{2}+\left|y\right|^{\frac{2a}{b}})^{\frac{b}{2a}}})\label{eq:2D_2_soln}
\end{equation}
where $g_{0}:\Sigma\rightarrow\mathbb{R}$ is an arbitrary $C^{1}(\Sigma)$ function. 
Now, notice that for any $g_{0}$, $g$ is an eigenfunction with eigenvalue
$\lambda$. Moreover, both dynamical systems $\left(1\right)$ and
$\left(2\right)$ have eigenfunctions of this form. Therefore, if
we ignore this nontrivial geometric multiplicity and pick, say, $g_{0}\left(x,y\right)=1$
for system $\left(1\right)$ and $g_{0}\left(x,y\right)=x$
for system $\left(2\right)$, we would have the stacked vector-valued
functions (c.f. Corr. 2)
\begin{eqnarray*}
	\mathcal{G}^{\left(1\right)}\left(x,y\right) & = & \left[\left(x^{2}+\vert y\vert^{\frac{2a}{b}}\right)^{\frac{\lambda_{1}}{2a}},\left(x^{2}+\vert y\vert^{\frac{2a}{b}}\right)^{\frac{\lambda_{2}}{2a}}\right]^{T},\\
	\mathcal{G}^{\left(2\right)}\left(x,y\right) & = & \left[\frac{x\left(x^{2}+\vert y\vert^{\frac{2a}{b}}\right)^{\frac{\lambda_{1}}{2a}}}{\sqrt{x^{2}+\left|y\right|^{\frac{2a}{b}}}},\frac{x\left(x^{2}+\vert y\vert^{\frac{2a}{b}}\right)^{\frac{\lambda_{2}}{2a}}}{\sqrt{x^{2}+\left|y\right|^{\frac{2a}{b}}}}\right]^{T}.
\end{eqnarray*}
Obviously, $\mathcal{G}^{\left(2\right),-1}\circ\mathcal{G}^{\left(1\right)}\left(x,y\right)\neq\left(x,y\right)$.

\bibliography{bibliography2}

\end{document}